\documentclass[10pt]{amsart}
\input{amssymb.sty}
\usepackage{hyperref} 

\title[Virtual $W^*$-superrigidity]{Examples of group actions which are virtually $W^*$-superrigid}
\author{Jesse Peterson}
\address{Department of Mathematics, Vanderbilt University, 1326 Stevenson Center, Nashville, TN 37240}
\email{jesse.d.peterson@vanderbilt.edu}
\date{}
\thanks{The author's research is partially supported by NSF Grant 0901510 and a grant from the Alfred P. Sloan Foundation}
\subjclass{}

\keywords{}
\dedicatory{}
\newtheorem{thm}{Theorem}[section]
\newtheorem{cor}[thm]{Corollary}
\newtheorem{lem}[thm]{Lemma}
\theoremstyle{definition}
\newtheorem{defn}[thm]{Definition}

\newtheorem{examp}[thm]{Example}

\newcommand{\G}{\Gamma}
\newcommand{\g}{\gamma}

\newcommand{\C}{{\mathbb C}}
\newcommand{\F}{{\mathbb F}}
\newcommand{\N}{{\mathbb N}}
\newcommand{\Z}{{\mathbb Z}}
\newcommand{\R}{{\mathbb R}}
\newcommand{\T}{{\mathbb T}}

\newcommand{\CC}{{\mathcal C}}
\newcommand{\HH}{{\mathcal H}}
\newcommand{\KK}{{\mathcal K}}

\newcommand{\NN}{{\mathcal N}}
\newcommand{\OO}{{\mathcal O}}

\newcommand{\UU}{{\mathcal U}}

\begin{document}
\begin{abstract}
We show that if $\G$ is a discrete group which does not have the Haagerup property but does have an unbounded cocycle into a $C_0$ 
representation and if $\G$ acts on a finite von Neumann algebra $B$ such that the inclusion $(B \subset B \rtimes \G)$ has the 
Haagerup property from below then any group-measure space Cartan subalgebra must have a corner which embeds into $B$ inside $B \rtimes \G$.

Taking the action to be trivial we produce examples of $II_1$ factors $N$ such that $N \overline \otimes M$ is not a group-measure space 
construction whenever $M$ is a finite factor with the Haagerup property.  Taking the action on a probability space with the Haagerup 
property from below we produce examples of von Neumann algebras which have unique group-measure space Cartan subalgebras.  Taking profinite actions of 
certain products of groups we use the unique Cartan decomposition theorem of N. Ozawa and S. Popa and the cocycle superrigidity theorem of A. 
Ioana to produce actions which are virtually $W^*$-superrigid.
\end{abstract}

\maketitle

\section{Introduction}

Given essentially free probability measure preserving actions of countable discrete groups $\G$ and $\Lambda$ on standard measure spaces $(X, \mu)$ and $(Y, \nu)$, a motivating problem in ergodic theory is to determine when the actions are equivalent in an appropriate sense.  The most natural notion of equivalence is a conjugacy between the actions, $\G \curvearrowright X$ and $\Lambda \curvearrowright Y$ are said to be conjugate if there exists a measure space isomorphism $\Delta:X \rightarrow Y$ and a group isomorphism
$\delta:\Gamma \rightarrow \Lambda$ such that $\Delta (\g x) = \delta(g) \Delta (x)$ for almost every $x \in X$.  

Associated to such an action one may construct the orbit equivalence relation $\mathcal R_{\G \curvearrowright X}$ on $X$ given by $x \sim y$
if and only if $x$ and $y$ have the same $\G$ orbit.  The actions $\G \curvearrowright X$ and $\Lambda \curvearrowright Y$ are said to be 
orbit equivalent (OE) if there is a measure space isomorphism $\Delta: X \rightarrow Y$ which preserves this equivalence relation, i.e.,
$\Delta(\G x) = \Lambda \delta(x)$ for almost every $x \in X$.  It is trivial to see that a conjugacy between the actions implements an orbit equivalence.

Also associated to such an action is the group-measure space construction $L^\infty(X,\mu) \rtimes \G$ of Murray and von Neumann \cite{murrayvonneumannII}.  $L^\infty(X, \mu) \rtimes \G$ is a finite von Neumann algebra which is a II$_1$ factor when the action is ergodic, 
moreover in this case $L^\infty(X, \mu)$ is a Cartan subalgebra of $L^\infty(X, \mu) \rtimes \G$, i.e., $L^\infty(X, \mu)$ is a maximal abelian
self adjoint subalgebra and the normalizer $\mathcal N_{L^\infty(X, \mu) \rtimes \G}(L^\infty(X, \mu)$ generates $L^\infty(X, \mu) \rtimes \G$
as a von Neumann algebra.  The actions $\G \curvearrowright X$ and $\Lambda \curvearrowright Y$ are said to be $W^*$ equivalent 
if the corresponding group-measure space constructions are isomorphic von Neumann algebras.

Singer showed in \cite{singer} that given a measure space isomorphism $\Delta: X \rightarrow Y$, $\Delta$ will implement an orbit equivalence
between the actions if and only if the corresponding von Neumann algebra isomorphism $\Delta_*:L^\infty(X, \mu) \rightarrow L^\infty(Y, \nu)$ 
given by $\Delta_*(f) = f \circ \Delta^{-1}$ extends to an isomorphism of the group-measure space constructions.  In particular, this gives the
following implications:
$$
{\rm Conjugacy} \ \implies \ {\rm Orbit \ Equivalence} \ \implies \ W^* \ {\rm Equivalence}.
$$

In general neither implication can be reversed, for example the theorem of Ornstein and Weiss \cite{ornsteinweiss} states that
any two free ergodic measure preserving actions of amenable groups are orbit equivalent, while the theorem of Connes and Jones \cite{connesjones}
shows that there are actions of groups which give rise to isomorphic group-measure space constructions but for which any isomorphism cannot carry one Cartan subalgebra onto the other and hence the actions are not orbit equivalent.  

A rigidity phenomenon in this setting is when there are 
general conditions which we may place on the groups and/or actions so that some of the implications above may be reversed.  The strongest form of rigidity is 
when all of the conditions are placed on only one side, say on $\G \curvearrowright X$, while the other group and action are allowed to be arbitrary.  In this setting we say
that the action $\G \curvearrowright X$ is superrigid.

Examples of OE-superrigidity (going from orbit equivalence to conjugacy) include those given by Furman \cite{furmanI, furmanII}, Popa \cite{popasuperrigid, popaspectralgap}, and Kida \cite{kidaI, kidaII}.  There is also a virtual OE-superrigidity result given by Ioana \cite{ioanaprofinite}.  Superrigidity going from $W^*$ equivalence to orbit equivalence has been given by Ozawa and Popa \cite{ozawapopaI, ozawapopaII}, unfortunately though there is
no intersection between the examples of Ozawa and Popa and and the OE-superrigidity examples above.  Hence finding examples of $W^*$-superrigidity (being able to 
reverse both implications) has remained elusive.

The purpose of this paper is to present further examples of superrigidity going from $W^*$ equivalence to orbit equivalence.
To state these examples we first introduce some notation.

\begin{defn}
Let $\CC$ be the class of groups $\G$ such that there exists an unbounded cocycle into a $C_0$ representation. 
\end{defn}

Examples of groups in $\CC$ include all groups with the Haagerup property, all non-trivial free products, and all groups $\G$ such that 
$\beta_1^{(2)}(\G) > 0$.  Examples of groups which are not in the class $\CC$ include all groups which do not have the Haagerup property and 
are a direct product of two infinite groups, and all groups which do not have the Haagerup property and contain an infinite normal abelian 
subgroup.

\begin{thm}[$W^*$ equivalence $\implies$ OE Superrigidity]\label{thm:introuniquegms}
If $\G \in \CC$ does not have the Haagerup property and acts freely and ergodically on a standard probability space $(X, \mu)$ such that $( 
L^\infty(X, \mu) \subset L^\infty(X, \mu) \rtimes \G )$ has the Haagerup property from below then $L^\infty(X, \mu)$ is the unique (up to unitary 
conjugacy) group-measure space Cartan subalgebra, i.e., if $\Lambda$ is a countable discrete group which acts freely on a probability space $(Y, 
\nu)$ and there is an isomorphism $\theta: L^\infty(Y, \nu) \rtimes \Lambda \rightarrow L^\infty(X, \mu) \rtimes \G = N$ then there exists a 
unitary $u \in \UU(N)$ such that $u \theta(L^\infty(Y, \nu)) u^* = L^\infty(X, \mu)$.
\end{thm}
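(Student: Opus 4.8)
The plan is to deduce the statement from the paper's main embedding theorem together with the principle that intertwining Cartan subalgebras are conjugate. Write $N = L^\infty(X,\mu) \rtimes \G$ and $B = L^\infty(X,\mu)$, so that $B$ is a Cartan subalgebra of the II$_1$ factor $N$ by freeness and ergodicity of $\G \curvearrowright X$. Given an isomorphism $\theta: L^\infty(Y,\nu) \rtimes \Lambda \to N$, set $A = \theta(L^\infty(Y,\nu))$. First I would verify that $A$ is again a group-measure space Cartan subalgebra of $N$. Since $\Lambda \curvearrowright Y$ is free, $L^\infty(Y,\nu)$ is already maximal abelian and regular, hence Cartan, in $L^\infty(Y,\nu) \rtimes \Lambda$; moreover, as $N$ is a factor the isomorphism $\theta$ forces $L^\infty(Y,\nu) \rtimes \Lambda$ to be a factor and hence the action to be ergodic, so there is no inconsistency. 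Consequently $A = \theta(L^\infty(Y,\nu))$ is a group-measure space Cartan subalgebra of $N$.

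Next I would apply the paper's central embedding result. Its hypotheses are met verbatim with $B = L^\infty(X,\mu)$ and the given action $\G \curvearrowright X$: we have $\G \in \CC$ without the Haagerup property, and the inclusion $(B \subset B \rtimes \G) = (L^\infty(X,\mu) \subset N)$ has the Haagerup property from below. Applying that theorem to the group-measure space Cartan subalgebra $A$ yields that a corner of $A$ embeds into $B$ inside $N$; that is, $A \prec_N B$ in the sense of Popa's intertwining-by-bimodules.

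It then remains to upgrade this partial intertwining to a global unitary conjugacy, invoking the principle, due to Popa, that two Cartan subalgebras $A, B$ of a II$_1$ factor $N$ with $A \prec_N B$ are automatically unitarily conjugate. Concretely, the intertwining furnishes projections $p_0 \in A$, $q_0 \in B$ and a nonzero partial isometry $v \in N$ with $v^* v = p_0$, $v v^* = q_0$ and $v\, p_0 A p_0\, v^* \subseteq q_0 B q_0$. Because $A$ and $B$ are maximal abelian, conjugation by $v$ carries the masa $p_0 A p_0$ of $p_0 N p_0$ onto a masa of $q_0 N q_0$ contained in the masa $q_0 B q_0$, whence $v\, p_0 A p_0\, v^* = q_0 B q_0$ exactly. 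Using now that $A$ and $B$ are \emph{regular} — their normalizers generate $N$ — one spreads $v$ around by normalizing unitaries of $A$ and $B$ and pieces the resulting partial isometries together into a unitary $u \in \UU(N)$ with $u A u^* = B$, which is the desired conclusion.

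The main obstacle is of course the embedding step, but that is precisely the content of the paper's central theorem and may be assumed here; relative to it the only genuine work is the conjugacy upgrade. For that I would take care to confirm the two standing requirements of Popa's principle — that $A$ and $B$ are honest Cartan subalgebras (not merely maximal abelian) and that $N$ is a factor — since maximal abelianness is what forces the corners to match exactly and regularity is what allows the local partial isometry to be globalized. I expect no difficulty beyond this bookkeeping once the embedding theorem is in hand.
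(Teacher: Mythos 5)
Your proposal is correct and follows essentially the same route as the paper: the corner embedding $\theta(L^\infty(Y,\nu)) \preceq_N L^\infty(X,\mu)$ comes from Theorem \ref{thm:general} (via the observation that $N$ inherits the failure of the Haagerup property from $\G$), and the upgrade from intertwining to unitary conjugacy of the two Cartan subalgebras is exactly Popa's Theorem A.1 in \cite{popabetti}, which you correctly re-sketch rather than merely cite.
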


The proof this theorem uses the connection between closable derivations and completely positive deformations established by Sauvageot in \cite{sauvageotder, sauvageotcp}, together with the infinitesimal analysis initiated in \cite{petersonl2}.  This should be considered as 
another entry in Popa's deformation/rigidity theory (see \cite{popaicm} for a survey).

By combining these techniques with the results of Ozawa-Popa and Ioana mentioned above we obtain a virtual $W^*$-superrigidity result.

\begin{thm}[Virtual $W^*$-superrigidity]\label{thm:introwesuper}
Let $\G_1 \in \CC$ be a finitely generated group which contains a subgroup $\G_0$ such that the pair $(\G_1, \G_0)$ has relative property (T).  
Let $\G_2$ be a finitely generated group which is weakly amenable with constant $1$, and which has a proper cocycle into a non-amenable 
representation.  Then any free ergodic profinite action of $\G = \G_1 \times \G_2$ which is ergodic when restricted to $\G_0$ and $\G_2$ is 
virtually $W^*$-superrigid.  That is to say if a group $\Lambda$ acts by free measure preserving transformations on a standard probability 
space $(Y, \nu)$ such that the von Neumann algebras $L^\infty(Y, \nu) \rtimes \Lambda$ and $L^\infty(X, \mu) \rtimes \G$ are isomorphic, then 
there exists an isomorphism $\theta: X \rightarrow Y$, two finite index subgroups $\G_f < \G$, $\Lambda_f < \Lambda$, a $\G_f$-ergodic 
component $X_f \subset X$ and a $\Lambda_f$-ergodic component $Y_f \subset Y$ such that $\theta(X_f) = Y_f$ and $\theta_{|X_f}$ is a conjugacy 
between the actions $\G_f \curvearrowright X_f$ and $\Lambda_f \curvearrowright Y_f$.
\end{thm}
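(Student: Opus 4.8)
The plan is to run Popa's deformation/rigidity strategy in two reductions: first pass from the von Neumann algebraic isomorphism to an orbit equivalence by locating a group-measure space Cartan subalgebra, and then pass from the orbit equivalence to a conjugacy by straightening the associated cocycle, absorbing at each stage the passage to finite-index subgroups and ergodic components that produces the ``virtual'' conclusion. Throughout write $N = L^\infty(X,\mu) \rtimes \G$ and $M = L^\infty(Y,\nu) \rtimes \Lambda$, and fix the given isomorphism $\theta : M \to N$, which is automatically trace-preserving.

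\textbf{Step 1 (unique group-measure space Cartan subalgebra).} I would first show that $N$ has, up to unitary conjugacy, a unique group-measure space Cartan subalgebra. Note that it is not clear that the product $\G_1 \times \G_2$ lies in $\CC$, so Theorem \ref{thm:introuniquegms} need not apply directly; instead I route this step through the unique Cartan decomposition theorem of Ozawa and Popa. The hypotheses imposed on $\G_2$ — weak amenability with constant $1$ together with a proper cocycle into a non-amenable representation — supply the weakly compact/malleable side of their analysis, while the profinite hypothesis makes $L^\infty(X,\mu)$ weakly compact inside $N$. Combining these with the product structure $\G = \G_1 \times \G_2$ places $N$ in the scope of their theorem. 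Since $\Lambda \curvearrowright Y$ is free, $\theta(L^\infty(Y,\nu))$ is itself a group-measure space Cartan subalgebra of $N$, so there is $u \in \UU(N)$ with $u\,\theta(L^\infty(Y,\nu))\,u^* = L^\infty(X,\mu)$. Replacing $\theta$ by $\mathrm{Ad}(u)\circ\theta$, I may assume $\theta(L^\infty(Y,\nu)) = L^\infty(X,\mu)$.

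\textbf{Steps 2--3 (orbit equivalence, then conjugacy).} With the Cartan subalgebras matched, Singer's theorem turns $\theta$ into an orbit equivalence: its restriction to the abelian subalgebras is a trace-preserving isomorphism $L^\infty(Y,\nu) \to L^\infty(X,\mu)$, hence a measure isomorphism $\Delta : X \to Y$ carrying $\G$-orbits to $\Lambda$-orbits, and this produces the orbit-equivalence cocycle $\alpha : \G \times X \to \Lambda$. Now I would invoke Ioana's cocycle superrigidity theorem for profinite actions. Since $(\G_1,\G_0)$ has relative property (T), so does $(\G,\G_0)$, and $\G_0$ acts ergodically; together with the profinite hypothesis this is exactly the input of Ioana's theorem, which yields a finite-index subgroup $\G_f < \G$, a $\G_f$-ergodic component $X_f \subset X$, and a homomorphism $\delta : \G_f \to \Lambda$ with $\alpha|_{\G_f \times X_f}$ cohomologous to $\delta$. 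Unwinding the coboundary and using ergodicity in the $\G_0$ and $\G_2$ directions to propagate, one finds that $\delta$ has finite-index image $\Lambda_f := \delta(\G_f) < \Lambda$ and that, after adjusting $\Delta$ by the transfer function of the cohomology, the resulting map is a genuine conjugacy between $\G_f \curvearrowright X_f$ and $\Lambda_f \curvearrowright Y_f := \Delta(X_f)$, with $Y_f$ a $\Lambda_f$-ergodic component. This is the asserted virtual conjugacy.

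The main obstacle is Step 1: verifying that the Ozawa--Popa unique Cartan machinery genuinely applies to $\G_1 \times \G_2$ when the weak amenability and cocycle hypotheses are placed only on the second factor. The delicate point is that the relative property (T) of $\G_1$ must not destroy the weak compactness and malleable deformation that drive the conjugacy of Cartan subalgebras; one must see the deformation coming from the $\G_2$-cocycle as having spectral gap relative to the $\G_1$-part — here the product structure and the ergodicity hypotheses are essential — so that the normalizer of any competing Cartan subalgebra can be intertwined back into $L^\infty(X,\mu)$. A secondary technical nuisance is to keep the finite-index and ergodic-component reductions of Steps 1--3 mutually consistent, so that a single pair $(X_f, Y_f)$ witnesses the conjugacy simultaneously.
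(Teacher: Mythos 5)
Your Steps 2--3 (Singer's theorem to convert matched Cartan subalgebras into an orbit equivalence, then Ioana's cocycle superrigidity for profinite actions using relative property (T) of $(\G,\G_0)$ and the ergodicity hypotheses to get the virtual conjugacy) agree with the paper. The problem is Step 1, and it is a genuine gap, not just a ``delicate point'' to be checked: you propose to apply the Ozawa--Popa unique Cartan decomposition to the whole algebra $N = L^\infty(X,\mu)\rtimes(\G_1\times\G_2)$, but their theorems require weak amenability with constant $1$ (and the cocycle/deformation hypotheses) for the \emph{acting group of the crossed product in question}, and here those hypotheses are imposed only on $\G_2$. Nothing is assumed about weak amenability of $\G_1$; on the contrary, $\G_1$ contains an infinite subgroup $\G_0$ with $(\G_1,\G_0)$ having relative property (T), which is exactly the kind of rigidity that typically obstructs the Ozawa--Popa machinery. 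Your closing remark that ``one must see the deformation coming from the $\G_2$-cocycle as having spectral gap relative to the $\G_1$-part'' names the obstruction but supplies no mechanism for overcoming it, and I do not see how to make that work: the normalizer of a competing Cartan subalgebra of $N$ can a priori spread across the $\G_1$-direction, where you have no deformation at all.

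The paper's resolution is a two-stage intertwining that you are missing. First it writes $N = B\rtimes\G_1$ with $B = L^\infty(X,\mu)\rtimes\G_2$ and applies its own deformation/rigidity result (Theorem \ref{thm:general}): since $\G_1\in\CC$ has an unbounded cocycle into a $C_0$ representation, since the profiniteness of the action gives the Haagerup property from below for the inclusion $(B\subset N)$, and since $N$ cannot have the Haagerup property (because $(\G_1,\G_0)$ has relative property (T) with $\G_0$ infinite), a corner of the unknown Cartan subalgebra $\theta(L^\infty(Y,\nu))$ must embed into $B$ inside $N$. Only \emph{then}, after cutting down to $qBq$ where the intertwined image is again a Cartan subalgebra (Jones--Popa), does one invoke Ozawa--Popa --- applied to $B = L^\infty(X,\mu)\rtimes\G_2$ alone, where the weak amenability and proper-cocycle hypotheses on $\G_2$ are precisely what their theorem needs --- to conjugate it onto $qL^\infty(X,\mu)q$, and factoriality of $N$ upgrades this to unitary conjugacy of the two Cartan subalgebras in $N$. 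So the role of the $\G_1$-hypotheses ($\CC$ membership plus relative property (T)) is to drive the first reduction into the $\G_2$-corner, not to coexist with the Ozawa--Popa analysis on all of $N$. Without this first stage your argument does not get off the ground.
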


Examples of a group actions which satisfies the hypotheses of Theorem \ref{thm:introwesuper} and are therefore virtually $W^*$-superrigid are 
the following.

\begin{examp}(Y. Shalom, \cite{shalom})\label{examp:shalom}
Let $n \geq 3$ be an odd integer, $p$ be a prime number and consider $\Z_p$ the ring of pro-$p$ integers.  Denote by $\lambda$ the Haar measure 
on the profinite (and in particular, compact) group $SL_n(\Z_p)$.  Consider $\Z$ as a dense subgroup of $\Z_p$ and also $SL_n(\Z)$ as a dense 
subgroup in $SL_n(\Z_p)$.

If $A_1, A_2, \ldots, A_k \in SL_n(\Z)$ and $w$ is a word in $\F_{k + 1}$ such that the closed set $C = \{ g \in SL_n(\Z_p) | w(g, A_1, \ldots, 
A_k) = e \}$ contains an open set, then since $SL_n(\Z)$ is dense in $SL_n(\Z_p)$ it follows that $C$ contains a set of the form $A\G$ where $A 
\in SL_n(\Z)$ and $\G < SL_n(\Z)$ is a finite index subgroup.  However for elements in $SL_n(\Z)$ the condition of being in the set $C$ is an
algebraic condition, and since $\G$ is Zariski dense in $SL_n(\C)$ it then follows that $w(B, A_1, \ldots, A_k) = A$, for all $B \in SL_n(\C)$.
  
By \cite{shalen} there exists elements in $SL_n(\C)$ which are free from $SL_n(\Z)$, therefore $C$ contains no open set and so 
by the Baire Category theorem it follows that 
the set of torsion free element in $SL_n(\Z_p)$ which are free from $SL_n(\Z)$ is dense.  In particular, by choosing such an element we have that 
$SL_n(\Z) * \Z$ acts on $SL_n(\Z_p)$ by left multiplication.

By Corollary 1.1 in \cite{breuillardgelander} We may densely embed a finite rank free group $F$ into $SL_n(\Z_p)$.  If we let $F$ act by 
inverse right multiplication then we obtain a measure preserving profinite action of $(SL_n(\Z) * \Z ) \times F$ on $(SL_n(\Z_p), \lambda)$ 
which is ergodic when restricted to $SL_n(\Z)$ or $F$ since the embeddings are dense.  Moreover since $n$ is odd $SL_n(\Z_p)$ has trivial 
center and hence this action is essentially free.
\end{examp}

After a preliminary version of this paper had been circulated, Popa and Vaes in \cite{popavaessuper} have been able
to produce a large class of explicit examples of actions which are $W^*$-superrigid.  Their results are also based on deformation/rigidity theory.

In the setting of group von Neumann algebras rather than group-measure space constructions we are also able to give new examples of 
factors which do not arise as a group-measure space construction.  The first example of such a factor is $L\mathbb F_n$ as established by
Voiculescu in \cite{voiculescucartan}.

\begin{thm}\label{thm:intronogms}
If $\G \in \CC$ is ICC and does not have the Haagerup property then $L\G \overline \otimes M$ is not a group-measure space construction 
whenever $M$ is a finite factor with the Haagerup property (including the case $M = \C$).
\end{thm}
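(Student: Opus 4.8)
The plan is to assume the contrary and apply the structural crossed-product theorem described in the abstract to a trivial action. Suppose $N = L\G \overline\otimes M$ admits a group-measure space decomposition, so that $N \cong L^\infty(Y,\nu)\rtimes\Lambda$ for some free measure preserving action of a countable group $\Lambda$, and let $A = L^\infty(Y,\nu) \subset N$ be the resulting group-measure space Cartan subalgebra. Since $\G$ is ICC the algebra $L\G$ is a $II_1$ factor, hence so is $N$, and therefore $A$ is a diffuse maximal abelian subalgebra whose normalizer generates $N$. I would then regard $N$ as the crossed product $M \rtimes \G$ for the trivial action of $\G$ on $M$, so that $B := M$ plays the role of the base algebra and $N = B \rtimes \G = L\G \overline\otimes M$.

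Next I would verify the two hypotheses needed to invoke the structural theorem with base $B = M$. The group $\G$ lies in $\CC$ and does not have the Haagerup property by assumption, so it only remains to check that the inclusion $(M \subset M\overline\otimes L\G)$ has the Haagerup property from below. This should follow directly from the Haagerup property of $M$: if $\phi_t\colon M\to M$ is a net of trace-preserving, subunital, compact completely positive maps converging to the identity, then the maps $\phi_t \otimes \mathrm{id}_{L\G}$ on $N$ converge to the identity and, as $M$-$M$ bimodular data over $M = M\otimes 1$, their associated bimodules are of the form $\HH_{\phi_t}\otimes L^2(L\G)$ with $\HH_{\phi_t}$ weakly contained in the coarse $M$-bimodule; hence they are weakly contained in $L^2(N)\otimes_M L^2(N)$, which is exactly the ``from below'' requirement. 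Granting this, the structural theorem applies and yields that the Cartan subalgebra has a corner embedding into the base, that is $A \preceq_N M$ in Popa's sense of intertwining.

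Finally I would derive a contradiction from $A \preceq_N M$. When $M = \C$ this is immediate: $A \preceq_N \C$ would force that no net of unitaries of $A$ tends weakly to $0$, contradicting the diffuseness of $A$. For general Haagerup $M$ the point is that $A$ is a regular subalgebra, so the embedding $A \preceq_N M$ should allow one to transport the compact deformation witnessing the Haagerup property of $M$ along the normalizer of $A$ and thereby establish the Haagerup property for all of $N$. This is the step I expect to be the main obstacle, since it requires upgrading a corner-level intertwining of the Cartan subalgebra into a global approximation property of $N$; the regularity of $A$ (its normalizer generating $N$), together with the fact that the abelian algebra $A$ is itself amenable, are what should make the transfer possible. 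Once $N$ is shown to have the Haagerup property the contradiction is clear: $L\G$ does not have the Haagerup property, since an ICC group without the Haagerup property has a group von Neumann algebra without it, and a tensor product of finite factors has the Haagerup property only when both tensor factors do, so $N = L\G\overline\otimes M$ cannot have it.
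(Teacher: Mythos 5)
Your setup and the first two steps are exactly the paper's: write $N=L\G\overline\otimes M$ as $B\rtimes\G$ with $B=M$ and trivial action, observe that $(M\subset M\overline\otimes L\G)$ has the Haagerup property from below because $M$ has the Haagerup property (the paper records this as an example after Definition \ref{defn:relhaagerup}: take $\Psi_t=\phi_t\otimes{\rm id}$, which leaves each $L^2Mu_\gamma$ invariant and is compact there), note that $N$ cannot have the Haagerup property since $L\G$ does not, and invoke Theorem \ref{thm:general} to conclude $A\preceq_N M$. All of that is correct and is the intended route.

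The gap is in your final step for general $M$, and it is a real one. You propose to derive from $A\preceq_N M$ that $N$ has the Haagerup property by ``transporting'' the compact deformation of $M$ along the normalizer of $A$. There is no mechanism for this: the deformation/rigidity machinery of the paper produces the Haagerup property of $N$ precisely from the \emph{opposite} condition $A\not\preceq_N B$ (that is the content of Theorem \ref{thm:general}), and a corner-level intertwining of $A$ into $M$ gives no global approximation property of $N$ — indeed $N=L\G\overline\otimes M$ manifestly fails the Haagerup property, so nothing true about $N$ can yield it, and the contradiction must be extracted from the intertwining itself. The point you are missing is that $A\preceq_N M$ is \emph{already} absurd, for the same structural reason as in your $M=\C$ case: Popa's intertwining theorem produces projections $p\in A$, $q\in M$, a normal $*$-homomorphism $\theta:pAp\rightarrow qMq$ and a nonzero partial isometry $v$ with $xv=v\theta(x)$; since $M'\cap N$ contains the II$_1$ factor $L\G$, the relative commutant of $\theta(pAp)$ in $qNq$ contains the II$_1$ factor $L\G q$, which is incompatible with $A'\cap N=A$ being abelian. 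This is exactly the one-line finish in the paper: ``since $L\G\subset M'\cap N$ we have that $L^\infty(Y,\nu)$ is not maximal abelian.'' So replace your last paragraph with this masa argument and the proof closes.
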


The author would like to thank Adrian Ioana for suggesting Lemma \ref{lem:reassemble} which strengthens these results, and Yehuda Shalom for 
Example \ref{examp:shalom}.

\section{Preliminaries}\label{sec:prelim}
Throughout this paper all finite von Neumann algebras will be separable and come with a faithful normal trace which we will denote by $\tau$.

\subsection{Compact correspondences}

Here we review the notation of an $N$-$N$ correspondence to have compact coefficients.

\begin{defn}
Let $N$ be a finite von Neumann algebra and $\HH$ an $N$-$N$ Hilbert bimodule.  $\HH$ is said to be a {\bf compact correspondence} if it has compact 
matrix coefficients, i.e., given any bounded vector $\xi \in \HH$ the c.p. map $\Phi: N \rightarrow N$ which satisfies $\tau (\Phi(x) y) = 
\langle x \xi y, \xi \rangle$, for all $x,y \in N$ is compact when viewed as an operator on $L^2N$.
\end{defn}

\begin{defn}\label{defn:compactcorrespondence}
Let $N$ be a finite von Neumann $B \subset N$ a von Neumann subalgebra and $\HH$ an $N$-$N$ Hilbert bimodule.  $\HH$ is said to be 
{\bf compact relative to $B$} if given any sequence $x_n \in (N)_1$ such that $\| E_B(y x_n z) \|_2 \rightarrow 0$, for all $y,z \in N$ then $\langle x_n \xi 
y_n, \xi \rangle \rightarrow 0$, for any sequence $y_n \in (N)_1$.  Equivalently, for any bounded vector $\xi \in \HH$ we have that $\| 
\Phi_\xi (x_n) \|_2 \rightarrow 0$.
\end{defn}

We remark here that we do not require in Definition \ref{defn:compactcorrespondence} for the correspondence to have a non-zero $B$-central 
vector.  Hence if $B_1 \subset B_2 \subset N$ are von Neumann subalgebras and $\HH$ is a $N$-$N$ Hilbert bimodule which is compact relative to 
$B_2$ then $\HH$ is also compact relative to $B_1$.  Also note that if $B_1$ has finite index in $B_2$ then $\HH$ is compact relative to $B_1$ 
if and only if $\HH$ is compact relative to $B_2$. 

\begin{examp}
Let $N$ be a finite von Neumann algebra, then $L^2N \overline \otimes L^2N$ is a compact correspondence.  More generally if $B \subset N$ is a 
von Neumann subalgebra then $L^2\langle N, B \rangle$ is compact relative to $B$.
\end{examp}

\begin{examp}
Let $\G$ be a discrete group which acts on a finite von Neumann algebra $B$, let $N = B \rtimes \G$ be the crossed product construction and 
suppose that $\pi: \G \rightarrow \OO(\KK)$ is a $C_0$ representation.  Then the associated $N$-$N$ bimodule given by $\HH_\pi = \KK \overline 
\otimes L^2N$ with actions satisfying 
$$
b_1u_{\gamma_1} (\xi \otimes \eta) b_2u_{\gamma_2} = (\pi(\gamma) \xi ) \otimes (b_1 u_{\gamma_1} \eta
b_2 u_{\gamma_2} ),
$$
for each $\xi \in \KK, \eta \in L^2N, \gamma_1, \gamma_2 \in \G, b_1, b_2 \in B$,
is compact relative to $B$.
\end{examp}

In general, if $\HH$ is a Hilbert $N$-$N$ bimodule which is compact relative to $B$ and $\Phi: N \rightarrow N$ is a unital subtracial c.p. map 
with associated pointed bimodule $(\HH_\Phi, \xi_\Phi)$ then the Hilbert $N$-$N$ bimodule generated by vectors of the form $\xi_\Phi \otimes_N 
\xi \otimes_N \xi_\Phi \in \HH_{\Phi^*} \otimes_N \HH \otimes \HH_\Phi$ will no longer be compact relative to $B$.  Consider the case when 
$\Phi$ is an automorphism for example.  However, if $\Phi$ is $B$-bimodular then it is easy to see that this does indeed hold.

\subsection{Closable derivations}

Given a finite von Neumann algebra $N$ and a Hilbert $N$-$N$ bimodule $\HH$, a closable derivation on $N$ into $\HH$ is a linear a map 
$\delta:D(\delta) \rightarrow \HH$ such that $D(\delta)$ is a weakly dense $*$-subalgebra of $N$ and for every $x, y \in D(\delta)$ the product 
rule holds:
$$
\delta(xy) = x\delta(y) + \delta(x)y.
$$
 We 
say that $\delta$ is real if there is a conjugate linear isometric involution $J$ on $\HH$ such that $J( x \delta(y) z) = z^* \delta(y^*) x^*$, 
for all $x,y,z \in D(\delta)$.

When given a closable real derivation $\delta: D(\delta) \rightarrow \HH$ we will use the same notation as in \cite{petersonl2} and associate to 
$\delta$ two deformations of unital $\tau$-symmetric completely positive maps
$$
\eta_\alpha =  \frac{\alpha}{\alpha + \delta^*\overline \delta}, \ \ \ \zeta_\alpha = \eta_\alpha^{1/2}.
$$  
The fact that these maps are completely positive follows from \cite{sauvageotcp}.

In \cite{petersonl2} the contraction $\tilde \delta_\alpha: N \rightarrow \HH$ was given by $\tilde \delta_\alpha (x) = \alpha^{-1/2} \overline\delta 
\circ \zeta_\alpha (x)$,  here however we will find it more useful instead to use the contraction $\delta_\alpha: N \rightarrow \HH_\alpha 
\otimes_N \HH \otimes_N \HH_\alpha$ given by 
$$
\delta_\alpha(x) = \alpha^{-1/2} \xi_\alpha \otimes_N \overline\delta \circ \zeta_\alpha(x) \otimes_N \xi_\alpha,
$$
where $(\HH_\alpha, \xi_\alpha)$ is the pointed correspondence associated to the completely positive map $\zeta_\alpha$ and $\otimes_N$ denotes 
Connes' tensor product of $N$-$N$ bimodules. 
Note that $\delta_\alpha(x)$ is well defined since $\xi_\alpha$ is a tracial vector, (see Section 1.3.1 in \cite{popacorrespondence}).

The advantage of using $\delta_\alpha$ is that the approximation property (Lemma 3.3 in \cite{petersonl2} or 
Lemma 4.1 in \cite{ozawapopaII}) has a nicer form than it does for $\tilde \delta_\alpha$.  Specifically, from Lemma 4.1 in \cite{ozawapopaII} it follows that for 
every $a, x \in N$ we have
$$
\| a \delta_\alpha(x) - \delta_\alpha(ax) \| \leq 50 \| x \|_\infty \| a \|_\infty^{1/2} \| \delta_\alpha (a) \|^{1/2}
$$
and 
$$
\| \delta_\alpha(xa) - \delta_\alpha(x)a \| \leq 50 \| x \|_\infty \| a \|_\infty^{1/2} \| \delta_\alpha (a) \|^{1/2}.
$$
Moreover, the correspondences $\HH_\alpha$ inherit many properties of $\HH$ useful in applications, e.g. compactness, non-amenability, being 
weakly contained in the coarse correspondence, etc.

An important class of closable real derivations on a crossed product von Neumann algebra are the ones which come from group cocycles.  If $\G$ 
is a countable discrete group which acts by $\tau$-preserving automorphisms on a finite von Neumann algebra $B$, $\pi: \G \rightarrow \OO(\KK)$ 
is an orthogonal representation, and $b:\G \rightarrow \KK$ is a cocycle ($b(\gamma_1 \gamma_2) = b(\gamma_1) + \pi(\gamma_2)b(\gamma_1)$ for 
all $\gamma_1, \gamma_2 \in \G$) then the derivation $\delta_b$ from $B \G \subset B \rtimes \G$ into the $(B \rtimes \G)$-$(B \rtimes \G)$ 
bimodule $\HH_\pi$ which satisfies $\delta_b(xu_\gamma) = b(\gamma) \otimes xu_\gamma$ for all $\gamma \in \G$, $x \in B$ is a closable real 
derivation.  From the remarks above we see that in this case the contractions ${\delta_b}_\alpha$ map into a $L\G$-$L\G$ Hilbert bimodules 
which are compact relative to $B$.

If $A \subset N$ is a von Neumann subalgebra, then Corollary 2.3 in \cite{popamalleableII} states that no corner of $A$ embeds into $B$ inside 
$N$ precisely when there exists a sequence of unitaries $u_n \in \UU(A)$ such that $\| E_B( x u_n y ) \|_2 \rightarrow 0$, for all $x,y \in N$.  
This fact, together with Theorem 4.5 in \cite{petersonl2} yields the following result.

\begin{thm}\label{thm:boundednormalizer}
Let $N$ be a finite von Neumann algebra, $B,A \subset N$ von Neumann subalgebras, $\HH$ a Hilbert $N$-$N$ bimodule which is compact relative to 
$B$ and $\delta:N \rightarrow \HH$ a closable real derivation.  If no corner of $A$ embeds into $B$ inside $N$, and the associated deformation 
$\eta_\alpha$ converges uniformly in $\| \cdot \|_2$ to the identity on $(A)_1$ then $\eta_\alpha$ converges uniformly in $\| \cdot \|_2$ to 
the identity on $(\NN_N(A)'')_1$.
\end{thm}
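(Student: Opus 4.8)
The plan is to view this as the $B$-relative version of the spectral-gap / transfer-to-normalizer mechanism behind Theorem~4.5 of \cite{petersonl2}, and to re-run its central estimate with ``compact relative to $B$'' replacing absolute compactness, the relative compactness being fed by Popa's intertwining criterion. First I would record the analytic dictionary: since $\|\delta_\alpha(x)\|^2 = \langle (1-\eta_\alpha)x, x\rangle$ and $0 \le \eta_\alpha \le 1$, uniform convergence of $\eta_\alpha$ to the identity in $\|\cdot\|_2$ on a bounded set is equivalent to $\sup\|\delta_\alpha(\cdot)\| \to 0$ on that set; writing $s_\alpha := \sup_{a \in (A)_1}\|\delta_\alpha(a)\|$, the hypothesis becomes $s_\alpha \to 0$, and the conclusion will be read off from $\sup_{x \in (\NN_N(A)'')_1}\|\delta_\alpha(x)\| \to 0$. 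Next I would invoke Corollary~2.3 of \cite{popamalleableII} as quoted above: since no corner of $A$ embeds into $B$, there are unitaries $w_n \in \UU(A)$ with $\|E_B(x w_n y)\|_2 \to 0$ for all $x,y \in N$. I would also note at the outset that the deformed bimodule $\HH_\alpha \otimes_N \HH \otimes_N \HH_\alpha$ into which $\delta_\alpha$ maps is itself compact relative to $B$; by the remark preceding the theorem this uses that $\zeta_\alpha$ is $B$-bimodular (equivalently, that $\delta$ kills $B$).

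The heart of the argument is the uniform bound on normalizing unitaries. Fix $u \in \NN_N(A)$. For $a \in \UU(A)$ one has $u^* a u \in \UU(A)$, so combining the two transfer estimates (with constant $50$) gives, for $\epsilon_\alpha := 50\, s_\alpha^{1/2}$,
\[
\|a\, \delta_\alpha(u) - \delta_\alpha(u)\,(u^* a u)\| \le \|a\,\delta_\alpha(u) - \delta_\alpha(au)\| + \|\delta_\alpha(u(u^*au)) - \delta_\alpha(u)(u^*au)\| \le 2\epsilon_\alpha,
\]
i.e. $\delta_\alpha(u)$ is approximately central for $A$ up to the twist $a \mapsto u^* a u$. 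Setting $\xi := \delta_\alpha(u)$ --- a bounded vector, since the outer legs $\xi_\alpha$ of $\delta_\alpha$ are tracial --- and $y_n := u^* w_n^* u \in \UU(N)$, the twisted centrality yields $\|w_n \xi y_n - \xi\| \le 2\epsilon_\alpha$ and hence $\|\xi\|^2 \le |\langle w_n \xi y_n, \xi\rangle| + 2\epsilon_\alpha \|\xi\|$. Because the deformed bimodule is compact relative to $B$ and $\|E_B(x w_n y)\|_2 \to 0$, the inner product $\langle w_n \xi y_n, \xi\rangle \to 0$ as $n \to \infty$; letting $n \to \infty$ gives $\|\delta_\alpha(u)\| \le 2\epsilon_\alpha$, a bound uniform over all $u \in \NN_N(A)$ and tending to $0$.

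It remains to upgrade from the generating normalizing unitaries to the whole unit ball of $M := \NN_N(A)''$, and here I would lean on the completely positive / Dirichlet-form part of Theorem~4.5 of \cite{petersonl2}. Concretely, one decomposes $L^2 M$ as an $A$-$A$ bimodule into cyclic pieces $\overline{A v_i}$ with $v_i \in \NN_N(A)$, expands $x \in (M)_1$ as $\sum_i a_i v_i$ with $\sum_i \|a_i\|_2^2 = \|x\|_2^2$, applies the transfer estimates to each $\delta_\alpha(a_i v_i) \approx a_i \delta_\alpha(v_i)$, and uses orthogonality of the pieces in the (coarse-like) deformed bimodule to sum the contributions with $\ell^2$ control by $s_\alpha$ and $\sup_{u \in \NN_N(A)}\|\delta_\alpha(u)\| \le 2\epsilon_\alpha$. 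This gives $\sup_{x \in (M)_1}\|\delta_\alpha(x)\| \to 0$, which by the dictionary of the first paragraph is exactly the asserted uniform convergence of $\eta_\alpha$ to the identity on $(\NN_N(A)'')_1$.

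I expect the main obstacle to be the interface between relative compactness and the spectral-gap machinery, in two places. First, one must know that $\delta_\alpha(u)$ is genuinely a \emph{bounded} vector so that the defining property of ``compact relative to $B$'' applies to it --- this is precisely the reason for working with $\delta_\alpha$ (whose outer legs are the tracial vectors $\xi_\alpha$) rather than with $\tilde\delta_\alpha$ --- and one must check that $\zeta_\alpha$ is $B$-bimodular so that the deformed bimodule inherits relative compactness. Second, the passage to $(\NN_N(A)'')_1$ requires the orthogonal $A$-bimodule decomposition of $L^2 M$ together with the accompanying $\ell^2$-bookkeeping in the deformed bimodule; this is the genuinely technical step, and it is the reason the proof is routed through Theorem~4.5 of \cite{petersonl2} rather than carried out by hand.
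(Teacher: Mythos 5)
The paper offers no written proof of this statement---it is asserted to follow from Corollary 2.3 of \cite{popamalleableII} together with Theorem 4.5 of \cite{petersonl2}---and your reconstruction of the core mechanism is the intended one. The central estimate is correct: for $u \in \NN_N(A)$ the two transfer inequalities from Lemma 4.1 of \cite{ozawapopaII} give $\| a\,\delta_\alpha(u) - \delta_\alpha(u)(u^*au) \| \leq 100\, s_\alpha^{1/2}$ for $a \in \UU(A)$, and pairing $\xi = \delta_\alpha(u)$ against the intertwining sequence $w_n \in \UU(A)$ supplied by Popa's criterion, the relative compactness of the deformed bimodule kills $\langle w_n \xi y_n, \xi\rangle$ and yields $\|\delta_\alpha(u)\| \leq 200\, s_\alpha^{1/2}$ uniformly over $\NN_N(A)$. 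Two of the caveats you raise resolve as follows: the restriction to bounded vectors is harmless, since the relative-compactness condition extends to arbitrary vectors by approximation together with the uniform bound $\|w_n\|_\infty, \|y_n\|_\infty \leq 1$; and the $B$-bimodularity of $\zeta_\alpha$, which you correctly identify as what makes $\HH_\alpha \otimes_N \HH \otimes_N \HH_\alpha$ compact relative to $B$, is indeed not among the stated hypotheses---it holds in every application in the paper because $\delta$ there vanishes on $B$, and must be treated as implicit (or one reruns the argument with $\tilde\delta_\alpha$ mapping into $\HH$ itself).

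The step that would fail as you sketch it is the last one, passing from the group $\NN_N(A)$ to $(\NN_N(A)'')_1$. Writing $L^2M = \oplus_i \overline{Av_i}$ and $x = \Sigma_i a_i v_i$ does not give what you want: the vectors $a_i\delta_\alpha(v_i)$ have no reason to be orthogonal in the deformed bimodule (it is compact relative to $B$, not coarse relative to $A$, so there is no $\ell^2$-bookkeeping available), and the individual errors $50\|a_i\|_\infty^{1/2}\|\delta_\alpha(a_i)\|^{1/2}$ are not summable over $i$. The mechanism actually used in Theorem 4.5 of \cite{petersonl2} is Popa's averaging argument for the unital, tracial, symmetric c.p. map $\eta_\alpha$: a uniform bound $\|u - \eta_\alpha(u)\|_2 \leq \e$ over the group $\NN_N(A)$ gives $\|u\xi_{\eta_\alpha}u^* - \xi_{\eta_\alpha}\|^2 \leq 3\e$ in the pointed bimodule $(\HH_{\eta_\alpha}, \xi_{\eta_\alpha})$; the minimal-norm element of the closed convex hull of $\{u\xi_{\eta_\alpha}u^* : u \in \NN_N(A)\}$ is fixed by each ${\rm Ad}(u)$, hence commutes with all of $M = \NN_N(A)''$, and lies within $\sqrt{3\e}$ of $\xi_{\eta_\alpha}$, which yields $\|x - \eta_\alpha(x)\|_2 \leq 2\sqrt{3\e}$ for every $x \in (M)_1$. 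Since you explicitly route this step through the cited theorem, this is a defect of the sketch rather than of the overall architecture, but the orthogonal-decomposition picture should be replaced by the convexity argument.
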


\subsection{The Haagerup property}

Recall that a countable discrete group $\G$ is said to have the Haagerup property or be a-T-menable in the sense of Gromov if any of the 
following equivalent conditions hold (see \cite{CCJJV} and \cite{propertyT} for more on the Haagerup property and other equivalent 
formulations):

\vskip .1in
\noindent
$(a)$.  There exists a $C_0$ representation of $\G$ which almost contains invariant vectors.
 \vskip .05in
\noindent
$(b)$.  There exists a sequence of $C_0$ positive definite functions on $\G$ which pointwise converge to $1$.
\vskip .05in
\noindent
$(c)$.  There exists a proper conditionally negative definite function on $\G$.
\vskip .05in
\noindent
$(d)$.  There exists a proper cocycle on $\G$.
\vskip .05in
\noindent
$(e)$.  There exists a proper cocycle on $\G$ into a $C_0$ representation.

\vskip .1in

If one does not have a sequence of $C_0$ positive definite functions as in condition $(b)$ above but one does have positive definite functions 
which are sufficiently far away from being uniformly close to $1$ on any infinite set, then by modifying an argument in \cite{akemannwalter} it 
is still possible to conclude that there is a proper conditionally negative definite function on $\G$.

\begin{lem}\label{thm:notuniformhaagerup}
Suppose $\G$ is a countable discrete group and $\varphi_n: \G \rightarrow \C$ is a sequence of positive definite functions such that $\lim_{n 
\rightarrow \infty} | 1 - \varphi_n (\gamma) | = 0$, for all $\gamma \in \G$, and for any sequence of numbers $m_k \in \N$ there exists a 
sequence $n_k \in \N$ such that $n_k \geq m_k$, for all $k \in \N$ and $\varphi_{n_k}$ does not converge uniformly on any infinite subset of 
$\G$, then $\G$ has the Haagerup property.
\end{lem}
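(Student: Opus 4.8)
The plan is to construct a proper conditionally negative definite function on $\G$, which yields the Haagerup property by condition $(c)$ above. Since $\varphi_n(e) \to 1$, after discarding finitely many terms and dividing each $\varphi_n$ by $\varphi_n(e)$ I may assume $\varphi_n(e) = 1$ for all $n$; this normalization affects neither the pointwise convergence $\varphi_n \to 1$ nor the non-uniform convergence hypothesis, since the normalizing constants tend to $1$. Replacing $\varphi_n$ by its real part (the real part of a positive definite function is again positive definite, and $\operatorname{Re}\varphi_n \to 1$ pointwise), set $\psi_n = 1 - \operatorname{Re}\varphi_n$. Each $\psi_n$ is then conditionally negative definite with $\psi_n(e) = 0$, and since $|\varphi_n(\gamma)| \le \varphi_n(e) = 1$ we have $\psi_n \ge 0$ together with the pointwise estimate $|1 - \varphi_n(\gamma)|^2 = 1 - 2\operatorname{Re}\varphi_n(\gamma) + |\varphi_n(\gamma)|^2 \le 2\psi_n(\gamma)$. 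In particular $\psi_n \to 0$ pointwise, and uniform smallness of $\psi_n$ on a set is equivalent to uniform convergence of $\varphi_n$ to $1$ on that set.

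First I would set up a diagonalization to guarantee summability. Fix an enumeration $\G = \{\gamma_1, \gamma_2, \ldots\}$ and put $F_k = \{\gamma_1, \ldots, \gamma_k\}$. Since each $F_k$ is finite and $\psi_n \to 0$ pointwise, $\max_{\gamma \in F_k}\psi_n(\gamma) \to 0$ as $n \to \infty$, so I may choose $m_k \in \N$ with $\max_{\gamma\in F_k}\psi_n(\gamma) \le 2^{-k}$ for all $n \ge m_k$. Feeding this sequence $(m_k)$ into the hypothesis produces indices $n_k \ge m_k$ such that $(\varphi_{n_k})_k$ does not converge uniformly on any infinite subset of $\G$. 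Now define
\[
\psi = \sum_{k=1}^\infty k\,\psi_{n_k}.
\]
For a fixed $\gamma_j$ and every $k \ge j$ we have $\gamma_j \in F_k$, hence $k\,\psi_{n_k}(\gamma_j) \le k\,2^{-k}$, so the series converges and $\psi$ is finite everywhere. Being a convergent sum of nonnegative conditionally negative definite functions, $\psi$ is itself conditionally negative definite (the defining inequality passes through the convergent sum), real, symmetric, and $\psi(e) = 0$.

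It remains to prove that $\psi$ is proper, and this is where the non-uniform convergence hypothesis does its work. Suppose $\psi$ were not proper, so that $S = \{\gamma \in \G : \psi(\gamma) \le R\}$ is infinite for some $R$. Since every summand is nonnegative, for each $k$ and each $\gamma \in S$ we get $k\,\psi_{n_k}(\gamma) \le \psi(\gamma) \le R$, whence $\sup_{\gamma\in S}\psi_{n_k}(\gamma) \le R/k \to 0$ as $k \to \infty$. By the pointwise estimate above, $\sup_{\gamma \in S}|1 - \varphi_{n_k}(\gamma)| \le \sqrt{2R/k} \to 0$, that is, $\varphi_{n_k} \to 1$ uniformly on the infinite set $S$, contradicting the choice of $(n_k)$. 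Hence $\psi$ is proper and $\G$ has the Haagerup property.

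I would expect the main obstacle to be the correct interleaving of the two selection mechanisms: summability of $\sum_k k\,\psi_{n_k}$ forces the indices $n_k$ to be large (controlled by the $m_k$ extracted from pointwise convergence), while properness requires precisely the escaping behaviour that the hypothesis supplies only after the $m_k$ have been fixed. The structural insight reconciling these is the use of diverging weights $a_k = k$: they are tame enough for summability once $\psi_{n_k}$ decays like $2^{-k}$ on $F_k$, yet their divergence converts any infinite bounded sublevel set of $\psi$ into a set of uniform convergence for $(\varphi_{n_k})$, which the hypothesis forbids. The remaining bookkeeping — realification, normalization, and verifying that conditional negative definiteness survives the infinite sum — is routine.
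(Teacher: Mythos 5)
Your proof is correct and follows essentially the same route as the paper's: normalize the $\varphi_n$, diagonalize the pointwise convergence to produce $m_k$, invoke the hypothesis to get $n_k\ge m_k$ with no uniform convergence on infinite sets, and form a weighted sum of the conditionally negative definite functions $1-\varphi_{n_k}$ with diverging weights so that boundedness on an infinite set would force uniform convergence there. The only differences are cosmetic (weights $k$ versus $2^k$, decay rate $2^{-k}$ versus $4^{-k}$, and the exact normalization).
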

\begin{proof}
Note that by considering the positive definite functions $\gamma \mapsto | \Re ( \varphi_n ( \gamma) / \varphi_n(e) ) |^2$ we may assume that 
$\varphi_n$ takes real values between $0$ and $1$.  Let $\{ \gamma_n \}_n$ be an enumeration of $\G$, since $\lim_{n \rightarrow \infty} | 1 - 
\varphi_n (\gamma) | = 0$, for all $\gamma \in \G$, we may construct a sequence $m_k$ such that for all $n \geq m_k$ we have $1 - 
\varphi_n(\gamma_j) < 4^{-k}$, whenever $1 \leq j \leq k$.

By assumption we may then find a sequence $n_k$ such that $n_k \geq m_k$ and $\varphi_{n_k}$ does not converge uniformly on any infinite subset 
of $\G$.  Define the conditionally negative definite function $\psi$ by $\psi(\gamma) = \Sigma_{k = 1}^\infty 2^k( 1  - \varphi_{n_k}(\gamma) 
)$.  Note that since $n_k \geq m_k$, we have $\psi(\gamma) < \infty$, for all $\gamma \in \G$, and if $\psi$ is bounded by $K$ on a set $X 
\subset \G$ then for all $x \in X$ we have $1 - \varphi_{n_k}(x) \leq K2^{-k}$, for all $k \in \N$ hence $X$ must be finite and so $\psi$ is 
proper.
\end{proof}

\section{The Haagerup property from below for inclusions of groups and von Neumann algebras}

\begin{defn}\label{defn:gprelhaagerup}
Let $\G$ be a countable discrete group and $\Lambda < \G$ a subgroup.  The pair $(\G, \Lambda)$ has the {\bf Haagerup property from below} if there 
exists a sequence of positive definite functions $\psi_n: \G \rightarrow \C$ such that $\lim_{n \rightarrow \infty} | 1 - \psi_n(\gamma) | = 
0$, for all $\gamma \in \G$, and ${\psi_n}_{|\Lambda} \in C_0(\Lambda)$, for all $n \in \N$.
\end{defn}

By using standard arguments on the relationship between cocycles and semigroups of positive definite functions it follows that an equivalent characterization 
of the Haagerup property from below is the existence of a cocycle on $\G$ which is proper when restricted to $\Lambda$, (see for example Theorem 2.1.1 in \cite{CCJJV}).  

Note that if $\Lambda$ is a normal subgroup such that $\G/\Lambda$ has the Haagerup property then $(\G, \Lambda)$ has the Haagerup property from below 
if and only if $\G$ has the Haagerup property.  Also note that by standard methods one can show that the definition above is 
equivalent to saying that $\G$ has a cocycle in some representation which is proper when restricted to $\Lambda$.

It is shown in \cite{cornulierstaldervalette} that the wreath product of $\Z/2\Z$ by $\F_2$ gives rise to a group which has the Haagerup 
property and thus $((\Z/2\Z) \wr \F_2, \oplus_{\F_2} (\Z/2\Z))$ has the Haagerup property from below.  More generally, in 
\cite{cornulierstaldervaletteII} it was shown that if $\G$ and $\Lambda$ both have the Haagerup property then $\Lambda \wr \G$ also has the 
Haagerup property.  If one starts with a group $\G$ which does not have the Haagerup property and $\Lambda$ is not the trivial group then the 
pair $(\Lambda \wr \G, \oplus_{\G} \Lambda )$ will never have the Haagerup property from below.  Indeed in this case given any cocycle on 
$\Lambda \wr \G$ there must be an infinite subset $X$ of $\G$ on which the cocycle is bounded and hence given $a \in \Lambda$ $a \not= e$ the 
cocycle will be bounded when restricted to the infinite set $\{ a \delta_g \}_{g \in X} \subset \G^{\Lambda} = \oplus_\G \Lambda$.

If we consider generalized wreath products then it is possible to still have the Haagerup property from below while still preserving the weak 
mixingness of the action.  We can achieve this roughly by requiring that our action has finite orbits when restricted to a subset which 
obstructs the Haagerup property.  

We say that a group $\G$ is residually Haagerup if for every $\g \in \G$, $\g \not= e$ there exists a 
homomorphism $\pi: \G \rightarrow H$ into a group $H$ with the Haagerup property such that $\pi(\g) \not= e$.

\begin{thm}\label{thm:bernoullihaagerup}
Let $\G$ be a residually Haagerup group, and $\Lambda$ be a group with the Haagerup property.  Consider a decreasing family in $\G$ of normal 
subgroups $H_n$, $n \geq 1$ such that $\G / H_n$ has the Haagerup property for all $n \in \N$.  $\G$ acts on the quotient space $\chi_n = \G / 
H_n$ by left multiplication and hence $\G$ also acts on the disjoint union $\chi = \coprod_n \G / H_n$ and we may consider the generalized 
wreath product $\Lambda \wr_\chi \G$.  If $A = \oplus_\chi \Lambda \lhd \Lambda \wr_\chi \G$ then the pair $(\Lambda \wr_\chi \G, A)$ has the 
Haagerup property from below.  Moreover the corresponding action of $\G$ on $\ell^2A$ is free if $\cap_n H = \{ e \}$ and is weakly mixing if 
$[\G : H_1] = \infty$.
\end{thm}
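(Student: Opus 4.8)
The plan is to verify Definition \ref{defn:gprelhaagerup} directly: I will produce a sequence of positive definite functions $\psi_k$ on $W := \Lambda \wr_\chi \G$ with $\psi_k|_A \in C_0(A)$ for each $k$ and $|1 - \psi_k(\gamma)| \to 0$ for every $\gamma \in W$. The key structural observation is that, because each $H_n$ is \emph{normal}, the action $\G \curvearrowright \chi_n = \G/H_n$ factors through the quotient $Q_n := \G/H_n$ acting on itself by translation. Since $Q_n$ and $\Lambda$ both have the Haagerup property, the ordinary wreath product $\Lambda \wr Q_n$ is Haagerup by \cite{cornulierstaldervaletteII}, and there is a surjection $p_n : W \to \Lambda \wr Q_n$ obtained by restricting the base to the block $\chi_n$ and composing with $\G \to Q_n$. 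Thus every block carries a genuine Haagerup structure, and writing $A = \oplus_n A_n$ with $A_n = \oplus_{\chi_n} \Lambda$, the task becomes to assemble the blockwise data into functions that are $C_0$ in all of the base directions simultaneously yet asymptotically trivial on $W$.

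For each block I would use two cocycles on $\Lambda \wr Q_n$. First, a proper cocycle $c_n$ witnessing the Haagerup property of $\Lambda \wr Q_n$; the crucial feature is that $c_n$ is \emph{position aware}, since properness forces $\| c_n(\lambda \delta_{gH_n}) \| \to \infty$ as the coordinate $gH_n \to \infty$ in $Q_n$, so $e^{-t \| c_n(\cdot) \|^2/2}$ is $C_0$ on $\Lambda \wr Q_n$ including in the coordinate direction. Second, the coordinatewise cocycle $c_n'(a) = \sum_q \delta_q \otimes c_\Lambda(a(q))$ into $\ell^2(Q_n) \otimes \KK_\Lambda$, where $Q_n$ permutes the $\delta_q$ and $c_\Lambda$ is a proper cocycle on $\Lambda$ with trivial kernel (obtained, say, by adding the regular-representation cocycle to any proper one, so that $\| c_\Lambda(\lambda) \| > 0$ for $\lambda \neq e$); this second cocycle vanishes on the translation subgroup $Q_n$, while $\| c_n'(\lambda \delta_q) \| = \| c_\Lambda(\lambda) \|$ is independent of the position $q$. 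Pulling both back along $p_n$ and forming $\varphi^{(n)}_{t,s} = e^{-t\| c_n \|^2/2}\, e^{-s \| c_n' \|^2/2}$ gives a positive definite function on $W$ that stays $\approx 1$ on the translation direction (where $c_n'$ is zero and $t$ is small), decays within the block as the position runs off, and becomes small on base generators when $s$ is large.

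I would then set $\psi_k = \prod_n \varphi^{(n)}_{t_{n,k},\, s_{n,k}} \circ p_n$, a product of positive definite functions, and tune the parameters diagonally. The hard part is a genuine tension: being $C_0$ on all of $A$ requires \emph{cross-block} decay, i.e. a single base generator lying in a high block $n$ must be sent near $0$, which forces $s_{n,k} \to \infty$ as $n \to \infty$; on the other hand $\psi_k \to 1$ pointwise requires every coordinate of a fixed $\gamma$, in particular its $\G$-image $gH_n$ in each of the infinitely many blocks, to contribute a factor tending to $1$, forcing the parameters to $0$ as $k \to \infty$. These are reconciled by a double-indexed choice such as $t_{n,k} \to 0$ summably in $n$ and $s_{n,k}$ increasing in $n$ but tending to $0$ in $k$ for each fixed $n$; here one uses that the $C_0$ constraint is imposed only on $A$, so the translation factors may be kept close to $1$, that within-block decay is already supplied by the position-aware $c_n$, and that a suitable exhaustion of $\G$ guarantees $\sum_n t_{n,k} \| c_n(gH_n) \|^2 < \infty$ so that the infinite products converge. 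Verifying that each $\psi_k$ is $C_0$ on $A$ then reduces to noting that an element of $A$ running to infinity must either develop large complexity inside some block (killed by $c_n$) or spread across arbitrarily high blocks (killed by $s_{n,k} \to \infty$), the trivial kernel of $c_\Lambda$ ensuring that even a single nonidentity generator in a high block is suppressed.

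Finally, the freeness and weak mixing assertions are the routine part and follow from the orbit combinatorics of $\G \curvearrowright \chi$. Normality of $H_n$ gives that $g$ fixes $\chi_n$ pointwise precisely when $g \in H_n$, and otherwise moves every point of $\chi_n$; hence if $\cap_n H_n = \{e\}$ then each $g \neq e$ lies outside $H_n$ for all large $n$ and so moves every coordinate in the infinitely many corresponding blocks, whence the associated generalized Bernoulli action is essentially free. Likewise, since the $H_n$ decrease we have $[\G : H_n] \geq [\G : H_1]$, so $[\G : H_1] = \infty$ makes every orbit $\chi_n$ infinite; as the action is weakly mixing exactly when $\G \curvearrowright \chi$ has no finite orbits, weak mixing follows. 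The residual Haagerup hypothesis on $\G$ is what guarantees, in the intended applications, the existence of such a separating family $\{H_n\}$ with Haagerup quotients.
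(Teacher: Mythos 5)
Your proof is correct, and it rests on the same two pillars as the paper's: normality of $H_n$ forces the block action $\G \curvearrowright \chi_n$ to factor through the Haagerup quotient $\G/H_n$, and the Cornulier--Stalder--Valette theorem then makes each $\Lambda \wr (\G/H_n)$ a Haagerup group. Where you diverge is in the assembly over infinitely many blocks. The paper truncates: it notes that the action on $\oplus_{n \le N} A_n$ factors through $\G/H_N$, pulls back $C_0$ positive definite functions from the Haagerup group $(\oplus_{n \le N} A_n) \rtimes (\G/H_N)$, and declares it enough to treat each finite truncation --- leaving implicit how functions that are $C_0$ only on $\oplus_{n \le N} A_n$ (and identically $1$ on all higher blocks) get combined into functions $C_0$ on all of $A$. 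You instead work one block at a time and make the assembly explicit: an infinite product of Schoenberg functions coming from a proper cocycle on each $\Lambda \wr (\G/H_n)$ together with a coordinatewise cocycle whose base cocycle $c_\Lambda$ is bounded below on $\Lambda \setminus \{e\}$ (the regular-representation summand), with doubly indexed parameters $t_{n,k}, s_{n,k}$. That uniform lower bound, forcing $\sup_{a \in A_n \setminus \{e\}} |\varphi^{(n)}(a)| \to 0$ as $n \to \infty$ for fixed $k$, is precisely what kills the elements consisting of a single nontrivial generator in an arbitrarily high block --- the one case the paper's truncation argument does not visibly address --- and it is compatible with pointwise convergence to $1$ because $c_n'$ vanishes on the translation copy of $\G$. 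So your route is longer but self-contained on the genuinely delicate cross-block point, while the paper's is shorter at the cost of an unproved reduction. Your treatment of freeness and weak mixing agrees with what the paper cites to Lemma 4.5 of Popa's superrigidity paper and is correct as stated.
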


\begin{proof}
If we denote by $A_n = \oplus_{x \in \chi_n} \Lambda$ then $A_n$ is preserved under the action of $\G$ and $A_n \rtimes \G = \Lambda \wr_{\chi_n} 
\G$.
Moreover $A = \oplus_n A_n$ and hence it is enough to show that $\oplus_{n = 1}^N A_n$ has the Haagerup property from below in $(\oplus_{n = 1}^N 
A_n ) \rtimes \G$, for all $N \in \N$.

If we fix $N \in \N$ then $(\oplus_{n = 1}^N A_n ) \rtimes (\G/H_N)$ contains $\oplus_{n = 1}^N A_n $ and has the Haagerup property by  
\cite{cornulierstaldervaletteII}.  Thus if we consider a sequence of $C_0$ positive definite functions on $(\oplus_{n = 1}^N A_n ) \rtimes \G$ 
which converges pointwise to $1$ then the pullback of this sequence to $(\oplus_{n = 1}^N A_n ) \rtimes \G$ shows that the pair $( (\oplus_{n = 
1}^N A_n ) \rtimes \G, \oplus_{n = 1}^N A_n )$ has the Haagerup property from below.

The conditions for freeness and weak mixingness are simple to verify, (see Lemma 4.5 in \cite{popasuperrigid}). 
\end{proof}

We will define the Haagerup property from below for von Neumann algebras only in the crossed product setting.

\begin{defn}\label{defn:relhaagerup}
Let $\G$ be a countable discrete group and $\sigma: \G \rightarrow {\rm Aut}(B)$ and trace preserving action on a finite von Neumann algebra 
$B$.  Let $N = B \rtimes \G$, we say that the inclusion $(B \subset N)$ has the {\bf Haagerup property from below} if there exists a sequence $\Psi_n:N 
\rightarrow N$ of unital completely positive maps such that $\lim_{n \rightarrow \infty} \| \Psi_n(x) - x \|_2 = 0$, for all $x \in N$, 
$\Psi_n$ leaves the subspace $L^2Bu_\gamma$ invariant for all $\gamma \in \G$, and $\xi \mapsto \Psi_n (E_B (\xi u_\gamma^*))$ defines a 
compact operator for all $\gamma \in \G$.
\end{defn}

\begin{examp}
If the action of $\G$ on $B$ is profinite, i.e., there exists a sequence of finite dimensional $\G$-invariant subalgebras $B_n \subset B$ such 
that $\cup_n B_n$ is dense in $B$ then $(B \subset N)$ has Haagerup property from below.  Indeed, in this case we may set $\Psi_n = E_{B_n 
\rtimes \G}$ which satisfy the conditions above.  Also note that if $\G$ acts trivially on $B$ and $N = B \overline \otimes L\G$, then $(B 
\subset \G)$ has the Haagerup property from below if and only if $B$ has the Haagerup property.  
\end{examp}

If $\G$ has the Haagerup property and $\G$ acts on a finite von Neumann algebra $B$ such that 
$(B \subset B \rtimes \G)$ has the Haagerup property from below, then it is easy to check that $B \rtimes \G$ 
itself has the Haagerup property.  We will see below in 
Corollary \ref{cor:vnhaagerupprop} that the converse also holds in the case when $B$ is abelian.

The following fact follows from the definitions above using standard results regarding representations and correspondences. 

\begin{thm}\label{thm:gpvnrelhaagerup}
Let $\G$ be a countable discrete group which acts on a countable discrete group $\Lambda$.  Then $(\Lambda \rtimes \G, \Lambda)$ has the 
Haagerup property from below if and only if the inclusion $(L\Lambda \subset L\Lambda \rtimes \G)$ has the Haagerup property from below.
\end{thm}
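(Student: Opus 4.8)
The plan is to run the standard dictionary between positive definite functions and (trace preserving, unital) completely positive maps — in its multiplier incarnation — and to match the two extra constraints on each side: pointwise convergence to $1$ with $\|\cdot\|_2$-convergence to the identity, and the $C_0(\Lambda)$-condition with the compactness condition of Definition \ref{defn:relhaagerup}. Throughout I identify $N = L(\Lambda \rtimes \G) = L\Lambda \rtimes \G$ with $B = L\Lambda$, write $u_\lambda$ ($\lambda \in \Lambda$) for the generators of $B$ and $u_\gamma$ ($\gamma \in \G$) for the implementing unitaries, so that $u_{(\lambda,\gamma)} = u_\lambda u_\gamma$ and $\{u_g\}_{g \in \Lambda \rtimes \G}$ is the canonical orthonormal basis of $L^2N$. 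The one computation I will need repeatedly is that $u_{(\lambda',\gamma')} u_\gamma^* = u_{(\lambda',\gamma'\gamma^{-1})}$, so that $E_B(u_{(\lambda',\gamma')}u_\gamma^*)$ vanishes unless $\gamma' = \gamma$, in which case it equals $u_{\lambda'}$.

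For the direction ``group $\implies$ von Neumann algebra'' I would start from positive definite $\psi_n$ on $\Lambda \rtimes \G$ with $\psi_n \to 1$ pointwise and $\psi_n|_\Lambda \in C_0(\Lambda)$ (normalizing so that $\psi_n(e) = 1$, which is harmless since $\psi_n(e) \to 1$), and set $\Psi_n = m_{\psi_n}$ to be the associated Herz--Schur multiplier, $\Psi_n(u_g) = \psi_n(g) u_g$; this is a unital, trace preserving, completely positive $\|\cdot\|_2$-contraction. On $u_g$ one has $\|\Psi_n(u_g) - u_g\|_2 = |\psi_n(g) - 1| \to 0$, so by uniform boundedness $\Psi_n \to \operatorname{id}$ in $\|\cdot\|_2$ everywhere, giving the first condition; since $\Psi_n$ is diagonal in the $\{u_g\}$ basis it preserves each $L^2Bu_\gamma = \overline{\operatorname{span}}\{u_{(\lambda,\gamma)}\}$, giving the second. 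For the third, the operator $\xi \mapsto \Psi_n(E_B(\xi u_\gamma^*))$ is by the computation above the weighted partial isometry sending $u_{(\lambda',\gamma)} \mapsto \psi_n(\lambda') u_{\lambda'}$ and annihilating $u_{(\lambda',\gamma')}$ for $\gamma' \neq \gamma$; its weights $\{\psi_n(\lambda')\}_{\lambda' \in \Lambda}$ lie in $C_0(\Lambda)$, so the operator is compact.

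For the converse I would produce the function from the map by taking diagonal matrix coefficients: $\psi_n(g) = \tau(\Psi_n(u_g) u_g^*) = \langle \Psi_n(u_g), u_g\rangle$. Writing $(\HH_{\Psi_n}, \xi_n)$ for the pointed correspondence of $\Psi_n$, with $\xi_n$ the tracial cyclic vector satisfying $\langle x \xi_n y, \xi_n\rangle = \tau(\Psi_n(x) y)$, and putting $\eta_g = u_g \xi_n u_g^*$, the bimodularity of the inner product yields $\psi_n(g^{-1}h) = \langle \eta_h, \eta_g\rangle$, whence $\sum_{g,h} \overline{c_g} c_h \psi_n(g^{-1}h) = \| \sum_g c_g \eta_g \|^2 \geq 0$ and $\psi_n$ is positive definite with $\psi_n(e) = 1$. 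The first condition of Definition \ref{defn:relhaagerup} gives $\psi_n(g) = \langle \Psi_n(u_g), u_g\rangle \to \langle u_g, u_g\rangle = 1$. Finally, applying the compactness condition with $\gamma = e$, the operator $S_n \colon \xi \mapsto \Psi_n(E_B(\xi))$ is compact and satisfies $\langle S_n u_\lambda, u_\lambda\rangle = \psi_n(\lambda)$ for $\lambda \in \Lambda$; since $\{u_\lambda\}_{\lambda \in \Lambda}$ is an infinite orthonormal family it tends weakly to zero, a compact operator is completely continuous, and so these diagonal coefficients tend to zero along $\Lambda$, i.e. $\psi_n|_\Lambda \in C_0(\Lambda)$.

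The routine part is the multiplier calculus; the two places that carry the actual content — and which I expect to be the main obstacle to state cleanly rather than to prove — are (i) the positive definiteness of $\psi_n = \tau(\Psi_n(\,\cdot\,)\,\cdot^*)$, which relies on the maps $\Psi_n$ being trace preserving so that the correspondence $\HH_{\Psi_n}$ carries a tracial vector $\xi_n$ (one may assume the maps in Definition \ref{defn:relhaagerup} are trace preserving, which is the relevant case), and (ii) the precise matching of the compactness of $\xi \mapsto \Psi_n(E_B(\xi u_\gamma^*))$ with membership of $\psi_n|_\Lambda$ in $C_0(\Lambda)$, which requires tracking how the semidirect-product coordinates pass through $E_B$. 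Everything else is bookkeeping in the $\{u_g\}$-basis.
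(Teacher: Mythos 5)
Your proposal is correct and follows essentially the same route as the paper: the forward direction uses the Herz--Schur multiplier $u_g \mapsto \psi_n(g)u_g$, and the converse takes the diagonal coefficients $\psi_n(g) = \tau(\Psi_n(u_g)u_g^*)$, exactly as in the paper's proof (which merely states these correspondences and leaves the verification of the $C_0$/compactness matching to the reader, which you carry out correctly).
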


\begin{proof}
Suppose $(\Lambda \rtimes \G, \Lambda)$ has the Haagerup property from below, then there exists a sequence of real valued positive definite 
functions $\varphi_n: \Lambda \rtimes \G \rightarrow \R$ such that $\varphi_n(e) = 1$ and $\varphi_n$ is $C_0$ when restricted to $\Lambda$.

If we consider the sequence of c.p. maps which satisfy $\phi_n(u_x) = \varphi_n(x) u_x$, for all $x \in \Lambda \rtimes \G$ then one can check 
that $\phi_n$ is a sequence which satisfies the conditions in Definition \ref{defn:relhaagerup}.

Conversely, If the inclusion $(L\Lambda \subset L\Lambda \rtimes \G)$ has the Haagerup property from below and $\phi_n: N \rightarrow N$ is a 
sequence of c.p. maps which satisfy Definition \ref{defn:relhaagerup}, then the positive definite functions given by $\varphi_n(x) = 
\tau(\phi_n(u_x)u_x^*)$ satisfy the conditions of Definition \ref{defn:gprelhaagerup}.
\end{proof}

A semigroup $\{ \Phi_t \}_t$ of completely positive maps $N$ has the useful property that $\tau( \Phi_t (x) x^* )$ increases as $t$ decreases 
for all $x \in N$.  We will want to exploit this in Theorem \ref{thm:nocornerhaagerup} below and so we will want that in Definition 
\ref{defn:relhaagerup} above one can take the sequence to be given by a semigroup.  Note that in the case when $N$ come from a subgroup/group 
inclusion as in Theorem \ref{thm:gpvnrelhaagerup}, 
then we may consider the c.p. deformations to be coming from the group.  In particular, this means 
that the c.p. maps will commute and the result becomes relatively simple.  In the general case one can follow the arguments in 
\cite{sauvageotfeller} and \cite{jolissaintmartin} where this is shown for the usual Haagerup property.

\begin{lem}\label{lem:haagerupsemigroup}
Let $\G$ be a countable discrete group and $\sigma: \G \rightarrow {\rm Aut}(B)$ and trace preserving action on a finite von Neumann algebra 
$B$.  Let $N = B \rtimes \G$, then the inclusion $(B \subset N)$ has the Haagerup property from below if and only if there exists a pointwise 
strongly continuous semigroup $\Phi_t: N \rightarrow N$ of unital symmetric completely positive maps such that $\Phi_t$ leaves the subspace 
$L^2Bu_\gamma$ invariant for all $\gamma \in \G$ and $\xi \mapsto \Phi_t( E_B (\xi u_\gamma^*))$ defines a compact operators for all $\gamma 
\in \G$.
\end{lem}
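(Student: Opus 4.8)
The converse is immediate: given such a semigroup, the maps $\Psi_n := \Phi_{1/n}$ satisfy the conditions of Definition \ref{defn:relhaagerup}, so $(B \subset N)$ has the Haagerup property from below. Thus I would concentrate on producing the semigroup from a sequence $\Psi_n$ witnessing the property. The strategy is to manufacture a single completely Dirichlet form whose associated symmetric Markov semigroup, obtained via Sauvageot \cite{sauvageotcp, sauvageotfeller}, is the desired $\Phi_t$, while carrying the slice-invariance and compactness of the $\Psi_n$ through to the generator. This parallels the passage for the ordinary Haagerup property in \cite{jolissaintmartin}, and the weighting I would use is exactly the one appearing in the proof of Lemma \ref{thm:notuniformhaagerup}.

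First I would reduce to symmetric maps: after a standard symmetrization and renormalization (as in \cite{jolissaintmartin}, using that $\|\Psi_n(x)-x\|_2 \to 0$), I may assume each $\Psi_n$ is a unital $\tau$-symmetric completely positive map, so that its $L^2$-extension is a self-adjoint contraction with $0 \le \Psi_n \le 1$. These operations respect both the decomposition $L^2N = \bigoplus_\gamma L^2Bu_\gamma$ and the ideal of compact operators, so each $\Psi_n$ remains block-diagonal for this decomposition with compact diagonal blocks, and still $\Psi_n \xi \to \xi$ for every $\xi \in L^2N$.

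Next I would build the generator. Fixing a countable $\|\cdot\|_2$-dense subset of $N$, I pass to a sparse subsequence and form the closed nonnegative quadratic form
$$\mathcal E(\xi) = \sum_{k} 2^k \, \langle (1 - \Psi_{n_k})\xi, \xi \rangle,$$
choosing the $n_k$ so rapidly increasing that $\mathcal E$ is finite on the fixed dense set, hence densely defined, exactly as the weights $2^k$ and the cutoffs $m_k$ are chosen in Lemma \ref{thm:notuniformhaagerup}. Since each $1 - \Psi_{n_k}$ comes from a symmetric unital completely positive map, $e^{-s\,2^k(1-\Psi_{n_k})}$ is completely positive, so each summand is a completely Dirichlet form and therefore so is their sum $\mathcal E$; by \cite{sauvageotcp, sauvageotfeller} the semigroup $\Phi_t = e^{-t\Delta}$ generated by $\mathcal E$ consists of unital (as $\mathcal E(1)=0$, since $\Psi_{n_k}1 = 1$) symmetric completely positive maps and is pointwise strongly continuous.

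Finally I would transfer the structural properties. As each $\Psi_{n_k}$ is block-diagonal, so are $\mathcal E$, $\Delta$ and every $\Phi_t$, which gives invariance of each $L^2Bu_\gamma$. For compactness I would argue on a fixed slice: writing $\Delta^{(\gamma)}$ for the restriction of $\Delta$ to $L^2Bu_\gamma$, any form-bounded set $\{\xi : \mathcal E(\xi) + \|\xi\|_2^2 \le C\}$ satisfies $\|(1-\Psi_{n_k})\xi\|_2^2 \le \langle (1-\Psi_{n_k})\xi,\xi\rangle \le 2^{-k}C$, so this set lies within $\sqrt{2^{-k}C}$ of the precompact set $\Psi_{n_k}^{(\gamma)}(\text{ball})$ for every $k$; hence the form domain embeds compactly into the slice, $\Delta^{(\gamma)}$ has compact resolvent, and $\Phi_t^{(\gamma)} = e^{-t\Delta^{(\gamma)}}$ is compact for $t>0$. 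Consequently $\xi \mapsto \Phi_t(E_B(\xi u_\gamma^*))$ is a composition of a bounded map with a compact block, hence compact. The delicate point is the interplay between the two roles of the compact blocks: the growing weights $2^k$ are needed so that the generator is \emph{proper} on each slice, forcing the eigenvalues of $\Delta^{(\gamma)}$ to tend to infinity, which is what ultimately yields compactness of $\Phi_t^{(\gamma)}$, while the sparsification is needed so that these same growing weights do not destroy density of the domain. Balancing these two requirements, and checking that the complete-Dirichlet property survives the infinite sum, is where the arguments of \cite{sauvageotfeller} and \cite{jolissaintmartin} must be reproduced with care.
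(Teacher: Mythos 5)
Your proposal is correct and follows essentially the route the paper intends: the paper states this lemma without proof, remarking only that ``one can follow the arguments in \cite{sauvageotfeller} and \cite{jolissaintmartin},'' and your construction --- symmetrize and positivize the $\Psi_n$, form the weighted sum $\sum_k 2^k \langle (1-\Psi_{n_k})\xi,\xi\rangle$ over a sparsified subsequence, and run Sauvageot's completely Dirichlet form machinery while tracking block-diagonality and slice-compactness --- is the natural execution of that plan, with the $2^k$-weighting and diagonal choice of $n_k$ mirroring the paper's own Lemma \ref{thm:notuniformhaagerup}. The two points you flag as delicate (that the infinite sum of completely Dirichlet forms remains completely Dirichlet, and that one needs $0\le\Psi_{n_k}\le 1$ on $L^2N$ so that $\|(1-\Psi_{n_k})\xi\|_2^2\le\langle(1-\Psi_{n_k})\xi,\xi\rangle$, arranged e.g.\ by passing to $\Psi_n^*\Psi_n$) are exactly where the cited arguments must be invoked, and your handling of them is sound.
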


\section{Deformation/rigidity arguments}

Recall that the class $\CC$ was defined to be the class of all countable discrete groups which admit an unbounded cocycle into a $C_0$ representation.

Suppose $\G$ is a countable discrete group in the class $\CC$, and $\sigma: \G \rightarrow {\rm Aut}(B)$ is a trace preserving action on a 
(possibly atomic) finite von Neumann algebra $B$.  Suppose that $N = B \rtimes \G$ is a $II_1$ factor and that the inclusion $(B \subset N)$ 
has the Haagerup property from below.  Let $\Psi_k: N \rightarrow N$ be a  sequence of unital completely positive maps such that $\lim_{k 
\rightarrow \infty} \| \Psi_k(x) - x \|_2 =0$, for all $x \in N$, $\Psi_k$ leaves the subspace $L^2Bu_\gamma$ invariant for all $\gamma \in 
\G$, and $\xi \mapsto \Psi_k( E_B( \xi u_\gamma^*))$ defines a compact operator for all $k \in \N, \gamma \in \G$. 

Let $b:\G \rightarrow \KK$ be an unbounded cocycle into a $C_0$-orthogonal representation and let $\delta: B\G \rightarrow \KK \overline 
\otimes L^2N$ be the associated real closable derivation, also let $\eta_\alpha = \alpha / (\alpha + \delta^* \overline \delta)$ and 
$\zeta_\alpha = {\eta_\alpha}^{1/2}$ be the associated completely positive deformations.

\begin{thm}\label{thm:tech}
Using the hypotheses and notation above if $A \subset N$ is a Cartan subalgebra and $\{ u_n \}_n \subset \UU(N)$ is a sequence of unitaries 
such that the following conditions are satisfied:
\vskip .1in
\noindent
$(a)$.  $\lim_{k \rightarrow \infty} \sup_n \| u_n - \Psi_k ( u_n ) \|_2 = 0$.
\vskip .1in
\noindent
$(b)$.  $\lim_{\alpha \rightarrow \infty} \sup_n \| u_n - \eta_\alpha (u_n) \|_2 = 0$.
\vskip .1in
\noindent
$(c)$.  $\{ u_n \}_n$ is not $\| \cdot \|_2$-precompact.
\vskip .1in
\noindent
$(d)$.   $\{ u_n a u_n^* \}_n$ is $\| \cdot \|_2$-precompact, for all $a \in \UU(A)$.
\vskip .1in
Then a corner of $A$ embeds into $B$ inside $N$.
\end{thm}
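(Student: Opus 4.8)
The plan is to argue by contradiction. Suppose that no corner of $A$ embeds into $B$ inside $N$. By Corollary 2.3 in \cite{popamalleableII} we may then fix a sequence $v_m \in \UU(A)$ with $\|E_B(x v_m y)\|_2 \to 0$ for all $x,y \in N$, and the goal becomes to contradict hypothesis $(c)$ by showing that $\{u_n\}_n$ is $\|\cdot\|_2$-precompact. The first structural input I would record is that the target bimodule of $\delta_\alpha$ is compact relative to $B$: since $b(e)=0$ the derivation $\delta$ vanishes on $B$ and is therefore $B$-bimodular, whence $T = \delta^*\overline\delta$, $\eta_\alpha$ and $\zeta_\alpha$ are all $B$-bimodular, and by the remark following Definition \ref{defn:compactcorrespondence} the correspondence $\HH_\alpha \otimes_N \HH \otimes_N \HH_\alpha$ is compact relative to $B$. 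The maps $\Psi_k$ furnish a second, independent source of compactness relative to $B$, coming from the Haagerup property from below.

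Next I would convert the hypotheses into quantitative smallness of the derivation. Writing $\|\delta_\alpha(x)\|^2 = \langle (1-\eta_\alpha)x, x\rangle = \tau((x-\eta_\alpha(x))x^*)$ and applying Cauchy--Schwarz to a unitary $x$ gives $\|\delta_\alpha(x)\|^2 \le \|x-\eta_\alpha(x)\|_2$, so hypothesis $(b)$ yields $\sup_n \|\delta_\alpha(u_n)\| \to 0$ as $\alpha \to \infty$. The contractions $\eta_\alpha$ on $L^2N$ converge strongly to the identity, hence uniformly on any $\|\cdot\|_2$-precompact set, so $(d)$ gives, for each fixed $a \in \UU(A)$, that $\sup_n \|\delta_\alpha(u_n a u_n^*)\| \to 0$ as $\alpha \to \infty$. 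Feeding both facts into the approximation property (the two $50$-estimates recorded above) produces the approximate equivariance $\delta_\alpha(u_n a u_n^*) \approx u_n \delta_\alpha(a) u_n^*$ with error $O(\sup_n\|\delta_\alpha(u_n)\|^{1/2})$; this is the device that later lets me relate the behaviour of the derivation along $\{u_n\}$ to its behaviour on $A$.

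The heart of the argument is to promote ``compact relative to $B$'' to genuine $L^2$-precompactness along $\{u_n\}$, and here both deformations and the sequence $v_m$ enter together. Given $\e > 0$, hypothesis $(b)$ lets me choose $\alpha$ large, and then $R \ge \alpha$, so that the spectral projection $1_{[R,\infty)}(T)$ has $\sup_n \|1_{[R,\infty)}(T) u_n\|_2 < \e$; that is, the mass of each $u_n$ in the directions where the cocycle $b$ is large is uniformly small. Hypothesis $(a)$ then approximates $u_n$ uniformly by $\Psi_k(u_n)$, which is compact on each individual slice $L^2 B u_\gamma$. What remains is the $\G$-support of $u_n$ in the complementary region where $b$ is bounded, precisely where $\eta_\alpha$ is close to the identity and gives no information. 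On this region I would invoke the compactness relative to $B$ of $\HH_\pi$ together with the defining property of $v_m$: using $(d)$ to tie the escaping directions of $u_n$ to conjugation on $A$, the vanishing of $E_B(\cdot\, v_m\, \cdot)$ forces the corresponding $B$-valued Fourier coefficients to carry no escaping mass. Assembling these cut-offs produces, for every $\e$, a genuinely compact operator $\mathcal T$ on $L^2N$ with $\sup_n \|u_n - \mathcal T u_n\|_2 < \e$, so $\{u_n\}_n$ is totally bounded, contradicting $(c)$.

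The step I expect to be the main obstacle is exactly this last tail estimate in the bounded-$b$ region: the cocycle deformation $\eta_\alpha$ is useless there, so one must run the compactness of $\HH_\pi$ and of the $\Psi_k$ against the no-embedding sequence $v_m$ while carefully tracking the Leibniz error terms and the dependence on $a \in \UU(A)$ supplied by $(d)$. Making ``compact relative to $B$'' and ``$E_B(\cdot\, v_m\, \cdot) \to 0$'' combine into honest, uniform-in-$n$ compactness is the delicate point; the mechanism is the same one that underlies Theorem \ref{thm:boundednormalizer}, which propagates deformation estimates across the normalizer of $A$ under the no-embedding hypothesis.
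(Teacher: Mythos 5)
Your setup (contradiction via Corollary 2.3 of \cite{popamalleableII}, the $B$-bimodularity of $\delta$ and hence compactness of $\HH_\alpha \otimes_N \HH \otimes_N \HH_\alpha$ relative to $B$, and the conversion of $(b)$ and $(d)$ into smallness of $\delta_\alpha$ along $u_n$ and $u_n a u_n^*$) matches the paper's, but the core of your argument is missing. You aim to contradict $(c)$ by exhibiting, for every $\e>0$, a compact operator $\mathcal T$ on $L^2N$ with $\sup_n\|u_n-\mathcal T u_n\|_2<\e$, and you correctly identify that neither deformation controls the part of $u_n$ supported over the (possibly infinite) set of group elements where the cocycle $b$ is bounded: $\eta_\alpha$ is close to the identity there, and $\Psi_k$ only gives compactness on each slice $L^2Bu_\gamma$ separately, not uniformly across infinitely many $\gamma$. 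Your proposed fix --- playing the compactness of $\HH_\pi$ relative to $B$ against the sequence $v_m$ and ``using $(d)$ to tie the escaping directions of $u_n$ to conjugation on $A$'' --- is never turned into an estimate, and I do not see how it could be: the condition $\|E_B(xv_my)\|_2\to 0$ constrains the position of $A$ relative to $B$, not the Fourier support of $u_n$. You have flagged this step yourself as ``the main obstacle''; it is not a technical loose end but the entire content of the theorem, so as written the proposal has a genuine gap.

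The paper closes this gap by a different mechanism, which uses hypothesis $(c)$ \emph{positively} rather than as the thing to be contradicted. After passing to a subsequence, non-precompactness forces the weak limit $x$ of $u_n$ to be a non-unitary, so some spectral projection $p$ of $|x|$ satisfies $\|xp\|_\infty<1$. Condition $(a)$ gives $\|P_F(u_n)-P_F(x)\|_2\to 0$ for finite $F\subset\G$, which together with compactness of $\HH_\alpha$ relative to $B$ and the precompactness of $\{\delta_\alpha(u_nau_n^*)\}_n$ lets one replace $u_n$ by $x$ in the pairing $\langle u_n\delta_\alpha(pa)u_n^*,\delta_\alpha(u_nau_n^*)\rangle$; the Leibniz estimates of Lemma 4.1 in \cite{ozawapopaII} then yield $\|\delta_\alpha(pa)\|^2\le\|xp\|^2\|\delta_\alpha(pa)\|^2+3\e_\alpha$, i.e., a gain factor $\|xp\|^2<1$ giving $\sup_{a\in\UU(A)}\|\delta_\alpha(pa)\|\to 0$. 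A maximality argument over projections commuting with $\NN_N(A)$ (using that $A$ is Cartan and $N$ is a factor) upgrades this to uniform convergence of $\eta_\alpha$ on all of $\UU(A)$, and Theorem \ref{thm:boundednormalizer} propagates it to $(N)_1$; the contradiction obtained is with the unboundedness of the cocycle $b$, not with condition $(c)$. Unless you supply either this weak-limit/gain-factor device or a genuine substitute for your tail estimate in the bounded-cocycle region, the argument does not go through.
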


\begin{proof}
We will assume that no corner of $A$ embeds into $B$ inside $N$ and then under the hypotheses above show that the deformation $\eta_\alpha$ 
must converge uniformly to the identity on $(N)_1$ and hence the cocycle $b$ must have been bounded. 

Since $A \not\preceq_N B$ it follows from Theorem \ref{thm:boundednormalizer} that if $\eta_\alpha$ converges uniformly on $\UU(A)$ then  
$\eta_\alpha$ also converges uniformly on $(N)_1$, thus we just need to show that $\eta_\alpha$ converges uniformly on $\UU(A)$.

By taking a subsequence if necessary we may assume that $\{ u_n \}_n$ is not $\| \cdot \|_2$-precompact and that $u_n$ converges weakly to an 
element $x \in (N)_1$.  Note that since $u_n$ does not converge strongly to $x$ we have that $x$ is not a unitary element and hence there is a 
non-zero spectral projection $p$ of $|x|$ such that $\| xp \|_\infty < 1$.

By condition $(a)$ we have that $\{ E_B(u_n u_\gamma^* ) \}_n$ is $\| \cdot \|_2$-precompact for all $\gamma \in \G$ and hence again by taking 
a subsequence we will assume that $E_B(u_n u_\gamma^*)$ converges in $\| \cdot \|_2$, for all $\gamma \in \G$.  Hence if $F \subset \G$ is a 
finite set and $P_F$ denotes the orthogonal projection onto $\overline {\rm sp} \{ Bu_\gamma | \gamma \in F \}$ then $\| P_F(u_n) - P_F(x) \|_2 
\rightarrow 0$, for all $F \subset \G$ finite.  Indeed by orthogonality it is enough to check the case when $F = \{ \g \}$ has a single element,
in this case $E_B(u_n u_\g^*) u_\g$ is Cauchy in $\| \cdot \|_2$ and converges weakly to $E_B(x u_\g^*)u_\g$, hence we must have that
$E_B(u_n u_\g^*) u_\g$ converges to $E_B(x u_\g^*)u_\g$ in $\| \cdot \|_2$.

By Lemma 4.1 in \cite{ozawapopaII} we then have that for all $a \in \UU(A)$
\begin{equation}
\limsup_{n \rightarrow \infty} | \ \| \delta_\alpha(pa) \|^2 - \langle u_n \delta_\alpha (pa) u_n^*, \delta_\alpha( u_n a u_n^*) \rangle \ | 
\leq 100 \sup_n \| \delta_\alpha (u_n) \|^{1/2} + 100 \| \delta_\alpha (p) \|^{1/2}.
\end{equation}

Also, since $\{ u_n a u_n^* \}_n$ is $\| \cdot \|_2$-precompact and $\delta_\alpha$ is a bounded operator,
we have that $\{ \delta_\alpha( u_n a u_n^* ) \}_n$ is also precompact.  Therefore, since as we noted in Section \ref{sec:prelim},
the $N-N$ correspondence $\HH_\alpha$ is compact relative to $A$ we have that 
$$
\lim_{n \rightarrow \infty} \langle (u_n - x) \delta_\alpha (pa) u_n^*, \delta_\alpha( u_n a u_n^*) \rangle = 0.
$$

Combining these facts, if we let $\varepsilon_\alpha = 100 \sup_n \| \delta_\alpha(u_n) \|^{1/2} + 100 \| \delta_\alpha (p) \|_2^{1/2}$
then we have
$$
\| \delta_\alpha (pa) \|_2^2 \leq \limsup_n | \langle u_{n} \delta_\alpha (pa) u_{n}^*, \delta_\alpha( u_{n} a u_{n}^*) \rangle
|
+ \varepsilon_\alpha
$$
$$
= \limsup_n | \langle x \delta_\alpha (pa) u_{n}^*, \delta_\alpha( u_{n} a u_{n}^*) \rangle |
+ \varepsilon_\alpha 
$$
$$
\leq \limsup_n | \langle xp \delta_\alpha (pa), u_{n}\delta_\alpha( a ) \rangle |
+ 2\varepsilon_\alpha 
$$
$$
\leq \|x p \|^2 \| \delta_\alpha (pa) \|_2^2 + 3\varepsilon_\alpha,
$$
where the last inequality follows from weak continuity of the left action of $N$ on $\HH$, the fact that $[ |x|, p ] = 0$, and again applying Lemma 4.1 in \cite{ozawapopaII}.

Hence we have shown that 
$$
\| \delta_\alpha(pa) \|_2 \leq (1 - \| x p \|^2 )^{-1} 3\varepsilon_\alpha ,
$$
for each $a \in \UU(A)$, where $\varepsilon_\alpha \rightarrow 0$ as $\alpha \rightarrow \infty$.

If we denote by $q$ the maximal projection in $N$ for which $\eta_\alpha$ converges uniformly on $q\UU(A)$ then it is a simple argument to show 
that $q$ exists and must commute with the normalizer of $A$.  Since $A$ is a regular subalgebra and $N$ is a factor we must then have that $q$ is 
a scalar.  However we have just shown that $0 \not= p \leq q$ and hence we must have $q = 1$, i.e., $\eta_\alpha$ converges uniformly on 
all of $\UU(A)$.

\end{proof}

Note that if $B$ above is taken completely atomic (e.g. if $B = \C$) then no corner of a diffuse subalgebra can embed into $B$ inside $N$ and hence we 
conclude that no sequence $\{ u_n \}_n$ can satisfy the hypotheses of the above theorem.  

We will now show that if no such sequence exists and 
if $\Lambda$ acts on $A$ such that $N \cong A \rtimes \Lambda$ then $\Lambda$ must have the Haagerup property.  We do this by producing a 
sequence of positive definite functions which satisfies the hypotheses of Theorem \ref{thm:notuniformhaagerup}. 

\begin{thm}\label{thm:nocornerhaagerup}
Suppose $\G$ is a countable discrete group in the class $\CC$, and $\sigma: \G \rightarrow {\rm Aut}(B)$ is a trace preserving action on a 
finite von Neumann algebra such that $B \rtimes \G$ is a $II_1$ factor and the inclusion $( B \subset B \rtimes \G )$ has the Haagerup property from below.  
Suppose that $\Lambda$ is a countable discrete group which acts freely by measure preserving transformations on a standard 
probability space $(Y, \nu)$, let $A = L^\infty(Y, \nu)$.  If $\theta: A \rtimes \Lambda \rightarrow N$ is an isomorphism such that $\theta(A) 
\not\preceq_N B$ then $\Lambda$ has the Haagerup property.
\end{thm}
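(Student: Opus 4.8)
The plan is to transport everything into $N$ through $\theta$ and then to produce a sequence of positive definite functions on $\Lambda$ satisfying the hypotheses of Lemma~\ref{thm:notuniformhaagerup}, whence $\Lambda$ will have the Haagerup property. Write $A = \theta(L^\infty(Y,\nu))$, a Cartan subalgebra of $N$ with $A \not\preceq_N B$, and let $v_\lambda = \theta(u_\lambda)$ for $\lambda \in \Lambda$, where the $u_\lambda$ are the canonical unitaries implementing the $\Lambda$-action on $L^\infty(Y,\nu)$. Then $\lambda \mapsto v_\lambda$ is a unitary representation of $\Lambda$ (so $v_s^* v_t = v_{s^{-1}t}$) whose range normalizes $A$, and since the action of $\Lambda$ is free the $v_\lambda$ are orthonormal in $L^2N$.

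Next I would invoke two deformations of $N$. By Lemma~\ref{lem:haagerupsemigroup} the Haagerup property from below of $(B \subset N)$ is implemented by a pointwise strongly continuous semigroup $\Phi_t$ of $\tau$-symmetric unital completely positive maps that are compact relative to $B$, while the hypothesis $\G \in \CC$ furnishes the unbounded cocycle and the associated $\tau$-symmetric completely positive maps $\eta_\alpha = \alpha/(\alpha + \delta^*\overline\delta)$. Because $\Phi_t$ and $\eta_\alpha$ are self-adjoint contractions on $L^2N$ with spectrum in $[0,1]$, the functions
$$\chi_t(\lambda) = \tau(\Phi_t(v_\lambda)v_\lambda^*), \qquad \omega_\alpha(\lambda) = \tau(\eta_\alpha(v_\lambda)v_\lambda^*)$$
are real and take values in $[0,1]$, and they are positive definite on $\Lambda$: if $(\HH_\Phi, \xi_\Phi)$ denotes the pointed correspondence of such a map $\Phi$, then $\zeta_\lambda := v_\lambda\,\xi_\Phi\,v_\lambda^*$ satisfies $\langle \zeta_t, \zeta_s\rangle = \tau(\Phi(v_{s^{-1}t})v_{s^{-1}t}^*)$, exhibiting each kernel as a Gram matrix. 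Since $\Phi_t \to \mathrm{id}$ and $\eta_\alpha \to \mathrm{id}$ pointwise in $\|\cdot\|_2$ we have $\chi_t, \omega_\alpha \to 1$ pointwise; and since $\{\Phi_t\}$ is a semigroup and $\alpha \mapsto \alpha/(\alpha + s)$ is increasing, both families are monotone in their parameter, so that uniform convergence to $1$ on a set along a subsequence of the parameters already forces uniform convergence along the whole net. I then set $\rho_n = \chi_{t_n}\,\omega_{\alpha_n}$ for fixed sequences $t_n \downarrow 0$ and $\alpha_n \uparrow \infty$; these are positive definite and converge pointwise to $1$.

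It remains to check the non-uniform convergence hypothesis of Lemma~\ref{thm:notuniformhaagerup}, and I would in fact establish the stronger assertion that no subsequence of $(\rho_n)$ converges uniformly to $1$ on any infinite subset of $\Lambda$, arguing by contradiction through Theorem~\ref{thm:tech}. Suppose some subsequence did converge uniformly on an infinite set $X = \{\lambda_n\}$. From $0 \le 1 - \chi_t \le 1 - \chi_t\omega_\alpha$ and $0 \le 1 - \omega_\alpha \le 1 - \chi_t\omega_\alpha$ (valid since $\chi_t, \omega_\alpha \in [0,1]$), together with the monotonicity above, both $\chi_t \to 1$ and $\omega_\alpha \to 1$ uniformly on $X$ along the full nets. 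Setting $u_n = v_{\lambda_n}$ and using $\|v_\lambda - \Phi_t(v_\lambda)\|_2^2 \le 2(1 - \chi_t(\lambda))$ and the analogous bound for $\eta_\alpha$, this yields exactly conditions $(a)$ and $(b)$ of Theorem~\ref{thm:tech}; condition $(c)$ holds because the infinitely many orthonormal vectors $v_{\lambda_n}$ are not $\|\cdot\|_2$-precompact; and $u_n a u_n^* \in A$ for every $a \in \UU(A)$ because $u_n \in \NN_N(A)$. Granting condition $(d)$, Theorem~\ref{thm:tech} produces a corner of $A$ embedding into $B$ inside $N$, contradicting $A \not\preceq_N B$. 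Hence no such subsequence exists, the hypothesis of Lemma~\ref{thm:notuniformhaagerup} is satisfied, and $\Lambda$ has the Haagerup property.

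The crux of the argument is verifying condition $(d)$: that $\{v_{\lambda_n}\,a\,v_{\lambda_n}^*\}_n$ is $\|\cdot\|_2$-precompact for every $a \in \UU(A)$, i.e. that the conjugation action of the extracted orbit on $A$ is weakly compact. This is precisely where the compactness relative to $B$ of the maps $\Phi_t$ is indispensable. The uniform convergence in condition $(a)$ says the vectors $v_{\lambda_n}$ are uniformly well approximated by the compact maps $\Phi_t$, hence uniformly tight in $L^2N$ relative to the $B$-module structure; combined with $A \not\preceq_N B$ this should force the conjugates $v_{\lambda_n}\,a\,v_{\lambda_n}^*$ to suffer no loss of $\|\cdot\|_2$-mass, so that weak limits along subsequences are actually $\|\cdot\|_2$-limits and the orbit is precompact. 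Carrying out this precompactness argument — in the spirit of the weak compactness arguments of Ozawa and Popa, but here extracted from the Haagerup property from below — is the main technical obstacle, and is the step I would expect to require the most care.
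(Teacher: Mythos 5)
Your overall skeleton---positive definite functions built from the two deformations $\Psi_k$ and $\eta_\alpha$, fed into Lemma~\ref{thm:notuniformhaagerup}, with Theorem~\ref{thm:tech} used to rule out uniform convergence on infinite subsets of $\Lambda$---is the same as the paper's. But there is a genuine gap exactly where you locate ``the main technical obstacle,'' namely condition $(d)$ of Theorem~\ref{thm:tech}. You propose to prove directly that $\{ v_{\lambda_n} a v_{\lambda_n}^* \}_n$ is $\| \cdot \|_2$-precompact for every $a \in \UU(A)$, extracting this from the compactness relative to $B$ of the maps $\Phi_t$. This cannot work, because condition $(d)$ is simply false in general under the hypotheses of the theorem: if, say, the action $\Lambda \curvearrowright (Y,\nu)$ is mixing and $\lambda_n \rightarrow \infty$, then for non-scalar $a \in \UU(A)$ the conjugates $v_{\lambda_n} a v_{\lambda_n}^*$ converge weakly to $\tau(a)1$ while keeping $\| \cdot \|_2$-norm equal to $1$, so no subsequence converges in $\| \cdot \|_2$ and the orbit is not precompact. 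No tightness argument in the spirit of Ozawa--Popa weak compactness can rescue this, since mixing actions are perfectly compatible with the hypotheses.

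The paper's resolution is a dichotomy that your write-up is missing. Suppose conditions $(a)$ and $(b)$ of Theorem~\ref{thm:tech} hold for $u_n = \theta(u_{\lambda_n})$ with $\{\lambda_n\}_n$ infinite. Either $(d)$ also holds, in which case Theorem~\ref{thm:tech} applied to $\{u_n\}_n$ contradicts $\theta(A) \not\preceq_N B$; or $(d)$ fails for some $a \in \UU(A)$, i.e.\ $w_n := u_n a u_n^*$ is not $\| \cdot \|_2$-precompact. In the latter case one applies Theorem~\ref{thm:tech} to the new sequence $\{w_n\}_n$ instead: conditions $(a)$ and $(b)$ transfer from $\{u_n\}_n$ and the single fixed element $a$ to $\{w_n\}_n$ by the approximate bimodularity estimates for $\delta_\alpha$ (Lemma 4.1 of \cite{ozawapopaII}, quoted in the preliminaries) and the analogous Cauchy--Schwarz estimates for the c.p.\ maps $\Psi_k$; condition $(c)$ holds by the assumed failure of $(d)$; and condition $(d)$ for $\{w_n\}_n$ is now \emph{trivial}, since $w_n \in \UU(\theta(A))$ and $\theta(A)$ is abelian, so $w_n a_0 w_n^* = a_0$ for every $a_0$. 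Either branch produces a corner of $A$ embedding into $B$, a contradiction, so $\{\lambda_n\}_n$ must be finite. With this replacement your argument goes through; the remaining ingredients of your proposal (positive definiteness via $v_\lambda \xi_\Phi v_\lambda^*$, monotonicity from the semigroup of Lemma~\ref{lem:haagerupsemigroup} and from $\alpha \mapsto \alpha/(\alpha+s)$, orthonormality of the $v_\lambda$ giving non-precompactness) match the paper, up to the cosmetic difference that the paper interleaves the two families of positive definite functions into a single sequence rather than taking products.
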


\begin{proof}
We will use the same notation as at the beginning of Theorem \ref{thm:tech}.  Let $\alpha_k \rightarrow \infty$ and define a sequence of 
positive definite functions $\varphi_k: \Lambda \rightarrow [0, 1]$, by $\varphi_{2k}(\lambda) = \tau( \eta_{\alpha_k}^j(\theta(u_\lambda) ) 
\theta(u_\lambda^*) )$ and $\varphi_{2k + 1}(\lambda) = \tau ( \Psi_k( \theta( u_\lambda) ) \theta( u_\lambda^* ))$.  Note that by Lemma 
\ref{lem:haagerupsemigroup} we may assume that $\varphi_{2k}(\lambda)$ is an increasing function of $k$, for all $\lambda \in \Lambda$.  Also 
note that $\varphi_k$ converges to $1$ uniformly on a set $S \subset \Lambda$ if and only if $\eta_\alpha$ and $\Psi_k$ converge to the 
identity uniformly on $\{ \theta(u_\lambda) | \lambda \in S \}$.

Let $\lambda_n \in \Lambda$ be a sequence of group elements and set $u_n = \theta(u_{\lambda_n}) \in N$. If $\lim_{k \rightarrow \infty} \sup_n 
\| u_n - \Psi_k ( u_n ) \|_2 = 0$ and $\lim_{\alpha \rightarrow \infty} \sup_n \| u_n - \eta_\alpha (u_n) \|_2 = 0$ then it follows from 
Theorem \ref{thm:tech} that either there exists some $a \in \UU(A)$ such that $\{ u_n a u_n^* \}_n$ is not $\| \cdot \|_2$-precompact or else 
$\{ u_n \}_n$ is $\| \cdot \|_2$-precompact.

The first case cannot occur since then we would have that $\lim_{k \rightarrow \infty} \sup_n \| u_n a u_n^* - \Psi_k ( u_n a u_n^* ) \|_2 = 
0$, $\lim_{\alpha \rightarrow \infty} \sup_n \| u_n a u_n^* - \eta_\alpha (u_n a u_n^*) \|_2 = 0$, $\{ u_n a u_n^* \}$ is not $\| \cdot 
\|_2$-precompact, and $u_n a u_n^* a_0 u_n a^* u_n^* = a_0$, for all $n \in \N$.  Hence Theorem \ref{thm:tech} would apply and we would obtain 
a contradiction.  Thus we must have that $\{ u_n \}_n$ is $\| \cdot \|_2$-precompact, i.e., $\{ \lambda_n \}_n$ is a finite set.

Since $\{ \lambda_n \}_n$ must be finite it follows that the sequence $\varphi_k$ satisfies the hypotheses of Theorem 
\ref{thm:notuniformhaagerup} and hence $\Lambda$ has the Haagerup property.

\end{proof}

Before we proceed with applications of the above theorems we will show that the conclusion above can be strengthened to show that not just 
$\Lambda$ but actually all of $N$ (and hence also $\G$) has the Haagerup property.

\section{The Haagerup property for group-measure space constructions}

The following lemma is adapted from Corollary 4.2 in \cite{ioanaT} using an argument in \cite{petersonpopa}.  We are grateful to Adrian Ioana 
for suggesting this approach.

\begin{lem}\label{lem:reassemble}
Let $\G$ be a countable discrete group, and $\sigma: \G \rightarrow \textrm{Aut} (A)$ a free ergodic action.  Let $N = A \rtimes \G$ be the 
resulting group-measure space construction.

Suppose $c_0 < 1$, and there exist unital, tracial c.p. maps $\Phi_n: N \rightarrow N$ such that $\Phi_n$ is positive as an operator on $L^2N$ 
for all $n \in \N$, and $\Phi_n$ satisfy the conditions:
\vskip .1in
\noindent
$(a)$.  $\lim_{n \rightarrow \infty} \| x - \Phi_n(x) \|_2 = 0$, for all $x \in N$
\vskip .1in
\noindent
$(b)$.  $\tau( \Phi_n(y)  y^*) \leq c_0 \| y \|_2^2$, for all $y \in (A)_1$ off of a $\| \cdot \|_2$-compact set, i.e., for all $\kappa > 0$, 
there exists $K \subset A$ finite, such that if $y \in (A)_1$ and $\| P_{ \overline{\rm sp} K } (y) \|_2^2 < \kappa$, then $\tau(\Phi_n(y) y^*) 
< c_0 \| y \|_2^2 + \kappa$.
\vskip .1in
  Then the inclusion $(A \subset N)$ has the Haagerup property from below, i.e., there exists a sequence unital c.p. map $\Phi_n':N \rightarrow N$ 
such that 
\vskip .1in
\noindent
$(a')$.  $\lim_{n \rightarrow \infty} \| \Phi_n'(x) - x \|_2 = 0$, for all $x \in N$.
\vskip .1in
\noindent
$(b')$.  $Au_\gamma$ is $\Phi_n'$ invariant for all $\gamma \in \G$, $n \in \N$.
\vskip .1in
\noindent
$(c')$.  For each $\gamma \in \G$, $\Phi_n'$ is a compact operator when restricted to $L^2Au_\gamma$.
\end{lem}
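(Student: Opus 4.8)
The plan is to manufacture, out of the maps $\Phi_n$, new unital c.p. maps $\Phi_n'$ that are diagonal for the grading $L^2N = \bigoplus_{\gamma \in \G} L^2 A u_\gamma$ and compact on each graded piece, i.e. that satisfy $(a')$--$(c')$. I would split this into two largely independent tasks: first, force invariance of every $L^2 A u_\gamma$ while keeping complete positivity and the pointwise convergence to the identity; and second, upgrade the \emph{fixed} contraction factor $c_0 < 1$ of hypothesis $(b)$ into \emph{genuine} compactness of the fibre maps.

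For the first task I would pass from $\Phi_n$ to its grading-diagonal compression
$$
\Phi_n^{\mathrm d} = \sum_{\gamma \in \G} P_\gamma \, \Phi_n \, P_\gamma, \qquad P_\gamma = \text{the orthogonal projection onto } L^2 A u_\gamma .
$$
By construction $\Phi_n^{\mathrm d}$ leaves each $L^2 A u_\gamma$ invariant, giving $(b')$, and it remains unital, trace preserving, and positive as an operator on $L^2 N$. Complete positivity survives the compression because the grading-diagonal part of a c.p. map is again c.p.: it is the $A$-valued completely positive multiplier attached to the positive-definite data of $\Phi_n$, equivalently the average of $\Phi_n$ against the canonical dual coaction of $\G$ on $N$. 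Condition $(a')$ then costs nothing here, since $P_\gamma$ fixes $L^2 A u_\gamma$ and therefore $\|\Phi_n^{\mathrm d}(a u_\gamma) - a u_\gamma\|_2 \le \|\Phi_n(a u_\gamma) - a u_\gamma\|_2 \to 0$; moreover $\tau(\Phi_n^{\mathrm d}(y)y^*) = \tau(\Phi_n(y)y^*)$ for $y \in A$, so hypothesis $(b)$ is inherited verbatim on the fibre $L^2 A u_e = L^2 A$.

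The second task is where $(b)$ and the strict inequality $c_0 < 1$ enter. Writing $T_n$ for the positive contraction induced by $\Phi_n^{\mathrm d}$ on $L^2 A$, hypothesis $(b)$ says exactly that for every $\kappa > 0$ there is a finite-rank projection $Q$ with $(I-Q)\,T_n\,(I-Q) \le (c_0 + \kappa)(I-Q)$. A standard spectral estimate then shows that $\mathbb 1_{(c_0 + 2\kappa,\,1]}(T_n)$ has finite rank for every $\kappa$; equivalently the essential spectrum of $T_n$ lies in $[0,c_0]$, so $(T_n - c_0 I)_+$ is a \emph{compact} operator. This is the mechanism producing the compactness demanded by $(c')$: the part of each fibre map lying strictly above the threshold $c_0$ is finite-dimensional, while the rest is to be discarded. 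The same conclusion holds on every $L^2 A u_\gamma$ by the grading invariance secured above.

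The main obstacle is to convert this spectral picture into an \emph{honestly completely positive} map. The natural operations that isolate the compact part -- subtracting $c_0 I$ and taking a positive part, or compressing by the spectral projections $\mathbb 1_{(c_0+\kappa,1]}(T_n)$ -- live at the level of operators on $L^2$ and do not come from c.p. maps of $N$; and one checks that no non-negative combination of the (c.p.) iterates $(\Phi_n^{\mathrm d})^{\circ k}$ can be compact on a fibre, since a power series with non-negative coefficients cannot vanish on $[0,c_0]$. I would therefore not build $\Phi_n'$ by functional calculus. Instead, following Corollary 4.2 of \cite{ioanaT} and the argument of \cite{petersonpopa}, I would realise the desired map through a bounded vector in the sub-correspondence of $\Phi_n^{\mathrm d}$ spanned by its above-$c_0$ spectral part: this sub-correspondence is compact relative to $A$ in the sense of Definition \ref{defn:compactcorrespondence}, so the associated c.p. map is automatically completely positive and compact on each $L^2 A u_\gamma$, and rescaling together with the convergence $\Phi_n^{\mathrm d} \to \mathrm{id}$ yields $(a')$. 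Replacing the sequence by the semigroup of Lemma \ref{lem:haagerupsemigroup} makes the monotonicity $\tau(\Phi_t(x)x^*)\uparrow \|x\|_2^2$ available, which is what ultimately controls the convergence in $(a')$ with enough uniformity to close the construction.
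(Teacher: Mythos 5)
Your first step (replacing $\Phi_n$ by its grading-diagonal compression $\Phi_n^{\mathrm d}=\sum_\gamma P_\gamma\Phi_nP_\gamma$, which is c.p.\ via the dual coaction) is a legitimate and arguably cleaner route to $(b')$ than the paper's, which instead restricts $\Phi_n$ to $A$ and rebuilds all the fibres from that restriction using the measure-theoretic augmentation of Corollary 4.2 in \cite{ioanaT}. But the heart of the lemma is $(c')$, and there your argument has a genuine gap. The object you propose -- a c.p.\ map realised by a bounded vector in a sub-correspondence that is ``compact relative to $A$'' -- cannot deliver $(c')$: compactness relative to $A$ in the sense of Definition \ref{defn:compactcorrespondence} only detects sequences $x_n$ with $\|E_A(yx_nz)\|_2\to 0$, i.e.\ sequences that leave $A$, and says nothing about compactness of the induced operator on a single fibre $L^2Au_\gamma$. (The paper's own example $L^2\langle N,A\rangle$ is compact relative to $A$, yet the associated map $E_A$ is the identity on $L^2A$.) What $(c')$ requires is honest compactness on each fibre, and your spectral analysis correctly shows only that each fibre operator has essential norm at most $\sqrt{c_0}$ -- it is not compact, and no single $\Phi_n$ can be massaged into a compact c.p.\ map by the operations you consider.

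The missing idea is the one the paper takes from the proof of $(1.2)\Rightarrow(0.2)$ in \cite{petersonpopa}: compose the \emph{whole sequence}. After passing to a subsequence so that the infinite compositions $\Phi_k'=\cdots\circ\Phi_{n}^0\circ\Phi_{n-1}^0\circ\cdots\circ\Phi_k^0$ converge pointwise in $\|\cdot\|_2$ and still tend to the identity as $k\to\infty$, each $\Phi_k'$ is automatically c.p., satisfies $(a')$ and $(b')$, and is compact on each fibre because the essential norm of an $m$-fold composition is at most $c_0^{m/2}\to 0$. You explicitly set this aside by observing that non-negative combinations of iterates of a \emph{single} $\Phi_n^{\mathrm d}$ cannot be compact -- true, but irrelevant, since the composition uses distinct maps converging to the identity. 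Two further points: your assertion that the spectral estimate ``holds on every $L^2Au_\gamma$ by the grading invariance'' does not follow, since hypothesis $(b)$ controls only the fibre over $e$ and $P_\gamma\Phi_nP_\gamma$ is an unrelated operator for fixed $n$ (the paper avoids this by building the $\gamma$-fibres out of the $e$-fibre data via the Radon--Nikodym cocycles $g_\gamma$; with your compression one would instead need the multiplicative-domain estimate $\|\Phi_n(au_\gamma)-\Phi_n(a)u_\gamma\|_2\leq \|a\|_\infty(2\|u_\gamma-\Phi_n(u_\gamma)\|_2)^{1/2}+\|a\|_\infty\|\Phi_n(u_\gamma)-u_\gamma\|_2$, valid only for $n$ large relative to $\gamma$, which the infinite composition can absorb). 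Finally, invoking Lemma \ref{lem:haagerupsemigroup} to control $(a')$ is circular: that lemma takes the Haagerup property from below as its hypothesis.
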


\begin{proof}
Given the sequence $\Phi_n: N \rightarrow N$ from above, by restricting this sequence to $A$ we have that $\lim_{n \rightarrow \infty} \| a - 
E_A \circ \Phi_n(a) \|_2$, for all $a \in A$ and $\lim_{n \rightarrow \infty} \sup_{a \in (A)_1} \| \sigma_\gamma(a) - E_A \circ \Phi_n( \sigma_\gamma(a) ) \|_2 
\leq \lim_{n \rightarrow \infty} 2 \| u_\gamma - \Phi_n(u_\gamma) \|_2^{1/2} = 0$.  

By \cite{ioanaT} we may augment the maps 
${E_A \circ \Phi_n}_{|A}$ to unital, tracial, c.p. maps $\Phi_n^0: N \rightarrow N$ which leave $Au_\gamma$ invariant for all $\gamma 
\in \G$ and which still satisfy conditions $(a)$ and $(b)$ above.  We will outline this process here, for full details see \cite{ioanaT}.  

Given $\Phi_n$ one 
associates a pointed $N$-$N$ Hilbert bimodule $(\HH, \xi)$ such that $\langle x \xi_n y, \xi \rangle = \tau(\Phi_n(x) y)$, for
each $x, y \in N$.  If we view $A = L^\infty(X, \mu)$ for some standard Borel probability space $(X, \mu)$ then by Lemma 2.1 in \cite{ioanaT}
there exists a probability measure $\nu$ 
on $X \times X$ such that 

($i$)  \ \ $\int{(f_1 \otimes f_2)d\nu} = \langle f_1 \xi f_2, \xi \rangle$ for each $f_1, f_2$ Borel functions on $X$.

($ii$)  \ \ $\| ( \g \times \g )_* \nu - \nu \| \leq 2 \| u_\g \xi - \xi u_\g \|$, for each $\g \in \G$.

($iii$)  \ \ $p_*^i\nu = \mu$, for $i = 1,2$ where $p^i$ is the projection $p^i(x_1, x_2) = x_i$.

If we then enumerate $\G$ by $\{ \g_i \}$ and take $0 < C_n < 1$, then we may set $\nu_0 = C_n\nu + \Sigma_{i = 1}^\infty (1 - C_n)2^{-i} (\g_i)_*\nu$
and notice that $\g_*\nu \sim \nu$, for each $\g \in \G$, and $\| \nu - \nu_0 \| \leq 2(1 - C_n)$.  Also note that we have
$p_*^i\nu_0 = \mu$, for $i = 1,2$.
Hence we may define $g_\g = (d\g_*\nu_0/d\nu_0)^{\frac{1}{2}} \in L^2(X \times X, \nu_0)_+$, for each $\g \in \G$.

If we view $L^2(X, \mu)$ as a Hilbert subspace of $L^2(X \times X, \nu_0)$ via the map $f \mapsto f \circ p^2$ and we denote 
by $E:L^2(X \times X, \nu_0) \rightarrow L^2(X, \mu)$ the orthogonal projection, then by Corollary 4.2 in \cite{ioanaT} the formula:
$$
\Phi_n^0(\Sigma_\g f_\g u_\g) = \Sigma_\g E((f \otimes 1)g_{\g^{-1}})u_\g
$$
defines a unital, tracial, c.p. map on $N$.

Moreover, if we take $C_n \rightarrow 1$, then since $\| \nu - \nu_0 \| \leq 2(1 - C_n) \rightarrow 0$ it follows that
both conditions $(a)$ and $(b)$ (for possibly a new $c_0 < 1$) will again be satisfied for $\Phi_n^0$.

Since $N$ is separable, by taking a subsequence we may assume that $\{ \Phi_n^0 \circ \Phi_{n-1}^0 \circ \cdots \circ \Phi_k^0(x) \}_n$ is a 
$\| \cdot \|_2$-Cauchy sequence for all $x \in N$, $k \in \N$, hence we may consider the sequence of unital, tracial, c.p. maps given by the 
infinite compositions $\Phi_k' = \cdots \circ \Phi_n^0 \circ \Phi_{n-1}^0 \circ \cdots \circ \Phi_k^0$.  Also by taking a subsequence we may 
assume that $\lim_{k \rightarrow \infty} \| x - \Phi_k'(x) \|_2 = 0$, for each $x \in N$, so that $\Phi_k'$ satisfies condition $(a')$ above.

As each $\Phi_n^0$ satisfies condition $(b')$ we have that this is the case also for the composition $\Phi_k'$.
The fact that $\Phi_k'$ also satisfies condition $(c')$ then follows since it is an infinite composition of maps which each shrinks the norm by 
a fixed amount off of a compact set.  See for example the Proof of $(1.2) \implies (0.2)$ in \cite{petersonpopa}. 
\end{proof}

As a consequence, we see that Definition \ref{defn:relhaagerup} is a reasonable one in that a group-measure space construction has the 
Haagerup property if and only if the Cartan inclusion has the Haagerup property from below and has the Haagerup property relative to the 
Cartan subalgebra.

\begin{cor}\label{cor:vnhaagerupprop}
Let $\G$ be a countable discrete group, and $\sigma: \G \rightarrow \textrm{Aut} (A)$ a free ergodic action on a standard probability space.  
Let $N = A \rtimes \G$ be the
resulting group-measure space construction.  Then $N$ has the Haagerup property if and only if $\G$ has the Haagerup property and the inclusion 
$(A \subset A \rtimes \G)$ has the Haagerup property from below.
\end{cor}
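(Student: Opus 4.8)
The plan is to prove the two implications separately, dispatching first the reverse one—if $\G$ has the Haagerup property and $(A \subset N)$ has the Haagerup property from below then $N$ has the Haagerup property—since this is the elementary direction already flagged in the remark preceding the statement. Given $C_0$ positive definite functions $\phi_m \to 1$ on $\G$, I would form the associated Fourier multipliers $\Theta_m(\sum_\gamma a_\gamma u_\gamma) = \sum_\gamma \phi_m(\gamma) a_\gamma u_\gamma$ on $N$, and take maps $\Psi_n$ witnessing the Haagerup property from below as in Definition \ref{defn:relhaagerup}. On each subspace $L^2 A u_\gamma$ the composition $\Theta_m \circ \Psi_n$ acts as $\phi_m(\gamma)$ times the compact operator $\Psi_n|_{L^2 A u_\gamma}$; since $L^2 N = \bigoplus_\gamma L^2 A u_\gamma$, truncating this orthogonal decomposition to a finite set $F \subset \G$ gives a finite direct sum of compact operators whose complementary tail has norm at most $\sup_{\gamma \notin F} |\phi_m(\gamma)|\,\|\Psi_n\|$, which tends to $0$ as $F$ exhausts $\G$ because $\phi_m \in C_0(\G)$. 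Hence $\Theta_m \Psi_n$ is compact on $L^2 N$; these maps are unital, tracial and completely positive, and a diagonal subsequence converges to the identity in $\|\cdot\|_2$, so $N$ has the Haagerup property.

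For the forward implication, suppose $N$ has the Haagerup property, witnessed by compact, unital, tracial, completely positive maps $\Phi_n : N \to N$ with $\|\Phi_n(x) - x\|_2 \to 0$ for all $x \in N$. Since a tracial c.p. map is self-adjoint on $L^2 N$, I would first replace $\Phi_n$ by $\Phi_n \circ \Phi_n$, which is again unital, tracial, compact, converges to the identity, and is now \emph{positive} as an operator on $L^2 N$; I keep the name $\Phi_n$. The point is to extract both conclusions from this single sequence.

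To see that $(A \subset N)$ has the Haagerup property from below I would invoke Lemma \ref{lem:reassemble} applied to the $\Phi_n$. Positivity on $L^2 N$ and condition $(a)$ hold by construction, so the content is condition $(b)$, where I can take $c_0 = 0$: for each fixed $n$ the operator $\Phi_n$ is compact on $L^2 N$, hence its restriction to $L^2 A$ carries the unit ball of $A$ weakly into a norm-null set, and therefore for every $\kappa > 0$ there is a finite $K \subset A$ with $\tau(\Phi_n(y) y^*) = \langle \Phi_n y, y \rangle < \kappa$ whenever $y \in (A)_1$ satisfies $\|P_{\overline{\mathrm{sp}} K}(y)\|_2^2 < \kappa$. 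Lemma \ref{lem:reassemble} then produces maps satisfying Definition \ref{defn:relhaagerup}. The one delicate point, which I expect to be the main obstacle, is that $c_0$ must be uniform in $n$ while the finite set $K$ is allowed to depend on $n$; I would verify this against the reassembling argument itself, where the infinite composition in the proof contracts by the fixed factor $c_0$ off compact sets that are permitted to vary with the index, so that the individual (rather than uniform) compactness of the $\Phi_n$ is exactly what is needed.

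Finally, for the Haagerup property of $\G$ I would use the trace-preserving conditional expectation $E_{L\G} : N \to L\G$, which exists automatically and is given on the crossed-product basis by $E_{L\G}(\sum_\gamma a_\gamma u_\gamma) = \sum_\gamma \tau(a_\gamma) u_\gamma$. The maps $\Xi_n := E_{L\G} \circ \Phi_n|_{L\G} : L\G \to L\G$ are then unital, tracial and completely positive; they converge to the identity on $L\G$ in $\|\cdot\|_2$ because $E_{L\G}$ is a $\|\cdot\|_2$-contraction fixing $L\G$, and each is compact on $L^2 L\G$ because it factors through the compact operator $\Phi_n$. Thus $L\G$ has the Haagerup property as a finite von Neumann algebra, and by the standard equivalence between the Haagerup property of a group and that of its group von Neumann algebra (see \cite{CCJJV}) the group $\G$ has the Haagerup property. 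Combining this with the previous paragraph completes the forward implication.
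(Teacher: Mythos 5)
Your proposal follows the route the paper intends (the corollary is stated there without an explicit proof, as a consequence of Lemma \ref{lem:reassemble} together with the ``easy to check'' remark in Section 3): Fourier multipliers $\Theta_m \circ \Psi_n$ for the reverse implication, and Lemma \ref{lem:reassemble} plus compression to $L\G$ for the forward one. One step, however, is wrong as written: a unital tracial c.p.\ map need \emph{not} be self-adjoint on $L^2N$ --- any trace-preserving automorphism of infinite order is a counterexample --- so $\Phi_n \circ \Phi_n$ need not be a positive operator on $L^2N$. The standard repair is to use $\Phi_n^* \circ \Phi_n$, where $\Phi_n^*$ is the adjoint of $\Phi_n$ with respect to $\tau$: this adjoint is again a unital tracial c.p.\ map, it is compact on $L^2N$ since the adjoint of a compact operator is compact, and $\| \Phi_n^*(x) - x \|_2^2 \leq 2\|x\|_2^2 - 2\Re\, \tau( \Phi_n(x) x^* ) \rightarrow 0$, so the composition still converges pointwise to the identity in $\| \cdot \|_2$. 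With that substitution the rest of your argument goes through.

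Two smaller points. First, in verifying condition $(b)$ of Lemma \ref{lem:reassemble} the natural estimate gives $\tau( \Phi_n(y) y^* ) \leq \| \Phi_n (1 - P_{\overline{\rm sp}K}) \| \, \|y\|_2^2 + \| P_{\overline{\rm sp}K}(\hat y) \|_2 \|y\|_2 < c_0 \|y\|_2^2 + \kappa^{1/2}$ rather than $c_0\|y\|_2^2 + \kappa$; you should therefore take $c_0 > 0$ (rather than $c_0 = 0$) and observe that this harmless rescaling of $\kappa$ does not affect the infinite-composition argument in the lemma, whose only requirement is a fixed contraction factor $c_0 < 1$ off finite-dimensional subspaces that may vary with $n$ --- exactly as you anticipated. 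Second, the final step ($L\G$ has the Haagerup property $\implies$ $\G$ does) is Choda's theorem; concretely, $\varphi_n(\g) = \tau( \Xi_n(u_\g) u_\g^* )$ is positive definite, tends to $1$ pointwise, and lies in $C_0(\G)$ because $\hat u_\g \rightarrow 0$ weakly in $L^2L\G$ while $\Xi_n$ is compact, so one does not even need to pass through the von Neumann algebraic formulation.
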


If the hypotheses of Theorem \ref{thm:nocornerhaagerup} are satisfied 
then Theorem \ref{thm:tech} together with Lemma \ref{lem:reassemble} shows that 
the inclusion $(A \subset N)$ has the Haagerup property from below.  On the other hand in Theorem \ref{thm:nocornerhaagerup} we have already 
shown that $\Lambda$ has the Haagerup property.  Thus we have shown that all of $N$ must have the Haagerup property.  We therefore obtain the 
following theorem.

\begin{thm}\label{thm:general}
Suppose $\G$ is a countable discrete group in the class $\CC$, and $\sigma: \G \rightarrow {\rm Aut}(B)$ is a trace preserving action on a 
finite von Neumann algebra such that $B \rtimes \G$ is a $II_1$ factor and the inclusion $( B \subset B \rtimes \G )$ has the Haagerup 
property from below. Suppose that $\Lambda$ is a countable discrete group which acts freely by measure preserving transformations on a standard 
probability space $(Y, \nu)$, let $A = L^\infty(Y, \nu)$.  If $\theta: A \rtimes \Lambda \rightarrow N$ is an isomorphism such that $\theta(A) 
\not\preceq_N B$ then $N$ (and hence also $\G$) has the Haagerup property.
\end{thm}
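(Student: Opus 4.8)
The plan is to obtain Theorem \ref{thm:general} by assembling three facts under the standing hypotheses, working throughout with the identification $N \cong A \rtimes \Lambda$ furnished by $\theta$. First, Theorem \ref{thm:nocornerhaagerup} already gives that $\Lambda$ has the Haagerup property. Second, I would show that the inclusion $(A \subset N)$ has the Haagerup property from below in the sense of Definition \ref{defn:relhaagerup}. Granting these two, Corollary \ref{cor:vnhaagerupprop}, applied to the group-measure space decomposition $N = A \rtimes \Lambda$, immediately yields that $N$ has the Haagerup property. The parenthetical claim that $\G$ is then a-T-menable follows because $L\G \subset N$: compressing a sequence of unital, trace preserving, $L^2$-compact, approximately identical c.p.\ maps $\Phi_n$ on $N$ by $E_{L\G}$ gives such maps on $L\G$, whence the positive definite functions $\g \mapsto \tau(\Phi_n(u_\g)u_\g^*)$ lie in $C_0(\G)$ (the diagonal of a compact operator tends to $0$ along the orthonormal basis $\{u_\g\}$) and converge pointwise to $1$, so condition $(b)$ of the Haagerup property holds for $\G$.

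The substantive step is the second one, and here the plan is to invoke Lemma \ref{lem:reassemble} with the average $\Phi_n = \frac{1}{2}(\eta_{\alpha_n} + \Phi_{t_n})$ of the two deformations already in play: the resolvent deformations $\eta_{\alpha_n} = \alpha_n/(\alpha_n + \delta^*\overline\delta)$ attached to the unbounded $C_0$-cocycle $b$ witnessing $\G \in \CC$ (with $\alpha_n \to \infty$), and the pointwise strongly continuous symmetric c.p.\ semigroup $\Phi_t$ (with $t_n \to 0$) witnessing the Haagerup property from below of $(B \subset N)$ supplied by Lemma \ref{lem:haagerupsemigroup}. Each $\Phi_n$ is unital, $\tau$-symmetric, positive as an operator on $L^2N$ (being an average of positive maps), and converges to the identity in $\|\cdot\|_2$, so hypothesis $(a)$ of Lemma \ref{lem:reassemble} is immediate. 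It remains to verify hypothesis $(b)$: that each $\Phi_n$ shrinks $\|\cdot\|_2$ by a fixed factor $c_0 < 1$ on the unitaries of $A$ lying off a finite-dimensional subspace. Once $(b)$ holds, Lemma \ref{lem:reassemble} reassembles these maps into c.p.\ maps which leave each $Au_\lambda$ invariant and are compact on $L^2Au_\lambda$, which is precisely the Haagerup property from below for $(A \subset N)$.

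The hard part will be establishing condition $(b)$, and this is where the hypothesis $\theta(A) \not\preceq_N B$ and Theorem \ref{thm:tech} enter. I would argue by contradiction: if $(b)$ failed, the average $\Phi_n$ would fail to shrink along an infinite, $\|\cdot\|_2$-non-precompact family $\{y_j\} \subset \UU(A)$, and since each summand is a $\|\cdot\|_2$-contraction, the average staying near $1$ forces \emph{both} $\eta_\alpha$ and $\Phi_t$ to converge uniformly to the identity on $\{y_j\}$. Because $A$ is abelian, $y_j a y_j^* = a$ for every $a \in \UU(A)$, so condition $(d)$ of Theorem \ref{thm:tech} is automatic, while conditions $(a)$, $(b)$, $(c)$ are exactly the uniform convergence and non-precompactness just produced; Theorem \ref{thm:tech} would then force a corner of $A$ to embed into $B$ inside $N$, contradicting $\theta(A) \not\preceq_N B$. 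The one delicate point is matching the \emph{per-map} shrinking demanded by Lemma \ref{lem:reassemble}$(b)$ with the \emph{family-uniform} convergence demanded by hypotheses $(a)$ and $(b)$ of Theorem \ref{thm:tech}; I expect to bridge this using the monotonicity of $\alpha \mapsto \tau(\eta_\alpha(y)y^*)$ and of $t \mapsto \tau(\Phi_t(y)y^*)$ (the latter from the semigroup structure of Lemma \ref{lem:haagerupsemigroup}), which upgrades convergence along the diagonal to the required uniform statement after passing to a tail of $\{y_j\}$. With $(b)$ secured, the three facts combine as above to yield the theorem.
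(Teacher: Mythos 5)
Your proposal follows the paper's own (very terse) argument exactly: $\Lambda$ has the Haagerup property by Theorem \ref{thm:nocornerhaagerup}, the inclusion $(A \subset N)$ has the Haagerup property from below by feeding the combined deformation into Lemma \ref{lem:reassemble} and using Theorem \ref{thm:tech} (with condition $(d)$ automatic since $A$ is abelian) to verify the shrinking condition $(b)$, and then Corollary \ref{cor:vnhaagerupprop} yields the Haagerup property for $N$, hence for $\G \subset N$. You in fact supply more detail than the paper does, and the delicate quantifier-matching point you flag (per-map shrinking versus family-uniform convergence, bridged by monotonicity of the resolvent and semigroup deformations) is precisely the point the paper itself leaves implicit.
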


\section{Proofs of Theorems \ref{thm:introuniquegms}, \ref{thm:introwesuper}, and \ref{thm:intronogms}}

Using Theorem \ref{thm:general} and the intertwining techniques of Popa we are now in position to prove the theorems in the introduction.

\begin{proof}[Proof of Theorem \ref{thm:intronogms}]
Suppose $\Lambda$ is a countable discrete group which acts freely on a standard probability space $(Y, \nu)$ such that $L^\infty(Y, \nu) 
\rtimes \Lambda \cong L\G \overline \otimes M = N$.  Since $\G$ does not have the Haagerup property neither does $N$ and hence by Theorem 
\ref{thm:general} we must have that a corner of $L^\infty(Y, \nu)$ embeds into $M$ inside $N$.  In particular, since $L\G \subset M' \cap N$ we 
have that $L^\infty(Y, \nu)$ is not maximal abelian and thus we obtain a contradiction.
\end{proof}

\begin{proof}[Proof of Theorem \ref{thm:introuniquegms}]
Suppose $\Lambda$ is a countable discrete group which acts freely on a standard probability space $(Y, \nu)$ such that $L^\infty(Y, \nu) 
\rtimes \Lambda \cong L^\infty(X, \mu) \rtimes \G = N$.  Again, since $\G$ does not have the Haagerup property neither does $N$ and hence by 
Theorem \ref{thm:general} we must have that a corner of $L^\infty(Y, \nu)$ embeds into $L^\infty(X, \mu)$ inside $N$.  As these are both Cartan 
subalgebras inside a $II_1$ factor Theorem A.1 in \cite{popabetti} shows that they must be conjugate by a unitary in $N$.
\end{proof}

\begin{proof}[Proof of Theorem \ref{thm:introwesuper}]
Again suppose $\Lambda$ is a countable discrete group which acts freely on a standard probability space $(Y, \nu)$ such that $L^\infty(Y, \nu) 
\rtimes \Lambda \cong L^\infty(X, \mu) \rtimes \G = N$.  From Theorem \ref{thm:general} we must have that a corner of $L^\infty(Y, \nu) = A$ 
embeds into $L^\infty(X, \mu) \rtimes \G_2 = B$ inside $N$.  Therefore by Theorem A.1 in \cite{popabetti} (or see Theorem 2.1 in 
\cite{popamalleableI}) there exists non-zero projections $p \in A$ and $q \in B$, a normal $*$-homomorphism $\theta: pAp \rightarrow qBq$, and a 
non-zero partial isometry $v \in N$ such that
$$
\forall x \in pAp \ \ xv = v\theta(x)
$$
and $v^*v \in \theta(pAp)' \cap qNq$, $vv^* \in p(A' \cap N)p$.

Note that $\theta(pAp)$ is a Cartan subalgebra of $qBq$ by \cite{jonespopa}.

Since $B$ is a $II_1$ factor which is weakly amenable with constant $1$ and since $\G_2$ has an unbounded cocycle into a nonamenable 
representation, it follows from \cite{ozawapopaI} and \cite{ozawapopaII} that $B$ and hence $qBq$ also, have at most $1$ Cartan subalgebra up 
to unitary conjugation, i.e., there must exist $w \in \UU(qBq)$ such that $w\theta(pAp)w^* = qL^\infty(X, \mu)q$, in particular, since $N$ is a 
$II_1$ factor it follows that $A$ and $L^\infty(X, \mu)$ are unitarily conjugate in $N$.

Thus we have shown that the actions $\G \curvearrowright (X, \mu)$ and $\Lambda \curvearrowright (Y, \nu)$ are orbit equivalent and the result 
then follows from Theorem A in \cite{ioanaprofinite}.
\end{proof}


\begin{thebibliography}{10}

\bibitem{akemannwalter} C. A. Akemann, M. Walter: \textit{{Unbounded negative definite functions}}. Canad. J. Math., {\bf 33} (4) 862-871, 
(1981)

\bibitem{propertyT} B. Bekka, P. de la Harpe, A. Valette: Kazhdan's property ($T$). New Mathematical Monographs, 11. Cambridge University 
Press, Cambridge, 2008. xiv+472 pp. ISBN: 978-0-521-88720-5

\bibitem{breuillardgelander} E. Breuillard, T. Gelander:  \textit{A topological Tits alternative}.  Ann. of Math. (2)  {\bf 166}  (2007),  no. 2, 427--474. 

\bibitem{CCJJV} P.-A. Cherix, M. Cowling, P. Jolissaint, P. Julg, A. Valette: Groups with the Haagerup property: Gromov's a-T-menability. 
Basel; Boston; Berlin : Birkh\"{a}user, 2001.

\bibitem{connesjones}A. Connes, V.F.R. Jones: \textit{A II$_1$ factor with two non-conjugate Cartan subalgebras}, Bull. Amer. math. Soc. {\bf 6}
(1982), 211-212.

\bibitem{cornulierstaldervalette}Y. Cornulier, Y. Stalder, A. Valette: \textit{Proper actions of lamplighter groups associated with free 
groups}, C. R. Math. Acad. Sci. Paris {\bf 346} (2008), no. 3-4, 173--176. 

\bibitem{cornulierstaldervaletteII}Y. Cornulier, Y. Stalder, A. Valette:  \textit{Proper Actions of wreath products and generalizations}, 
preprint 2009, {\bf arXiv.org}:0905.3960.

\bibitem{cornuliertesseravalette}Y. Cornulier, R. Tessera, A. Valette: \textit{Isometric group actions on Banach spaces and representations 
vanishing at infinity}. Transform. Groups {\bf 13} (2008), no. 1, 125--147. 

\bibitem{furmanI}A. Furman: \textit{Gromov's measure equivalence and rigidity of higher rank lattices}, Ann. of Math. {\bf 150} (1999), 1059-1081.

\bibitem{furmanII}A. Furman: \textit{Orbit equivalence rigidity}, Ann. of Math. {\bf 150} (1999), 1083-1108.

\bibitem{ioanaprofinite} A. Ioana: \textit{Cocycle superrigidity for profinite actions of property (T) groups}, preprint 2008, {\bf arXiv.org}:0805.2998.

\bibitem{ioanaT} A. Ioana: \textit{Relative property (T) for the subequivalence relations induced by the action of $SL_2(\Z)$ on $\T^2$}, 
preprint 2009, {\bf arXiv.org}:0901.1874.

\bibitem{jolissaintmartin} P. Jolissaint, F. Martin: \textit{Alg\`{e}bres de von Neumann finies ayant la propri\'{e}t\'{e} de Haagerup et 
semi-groupes $L^2$-compacts}.  Bull. Belg. Math. Soc. Simon Stevin {\bf 11} (2004), no. 1, 35--48. 

\bibitem{jonespopa}V. Jones, S. Popa:  Some properties of MASAs in factors. Invariant subspaces and other topics (Timis,oara/Herculane, 1981), 
pp. 89--102, Operator Theory: Adv. Appl., 6, Birkhäuser, Basel-Boston, Mass., 1982.

\bibitem{kidaI}Y. Kida: \textit{Measure equivalence rigidity of the mapping class group}, to appear in Ann. of Math.,
{\bf arXiv.org}:math/0607600

\bibitem{kidaII}Y. Kida: \textit{Rigidity of amalgamated free products in measure equivalence theory},  preprint 2009,
{\bf arXiv.org}:0902.2888

\bibitem{murrayvonneumannII} F. J. Murray, J. von Neumann: \textit{On rings of operators. II}, Trans. Amer. Math. Soc. {\bf 41} (1937), no. 2, 
208-248.

\bibitem{ornsteinweiss}D.S. Ornstein, B. Weiss: \textit{Ergodic theory of amenable group actions}, Bull. Amer. Math. Soc. (N.S.) {\bf 2}
(1980), 161-164.

\bibitem{ozawapopaI} N. Ozawa, S. Popa: \textit{On a class of II$_1$ factors with at most one Cartan subalgebra}, to appear in Ann. of Math., 
{\bf arXiv.org}:0706.3623.

\bibitem{ozawapopaII} N. Ozawa, S. Popa: \textit{On a class of II$_1$ factors with at most one Cartan subalgebra II}, Preprint 2008, 
{\bf arXiv.org}:0807.4270.

\bibitem{petersonl2} J. Peterson: \textit{$L^2$-rigidity in von Neumann algebras}, Invent. math. {\bf 175} (2009) 417-433.

\bibitem{petersonpopa} J. Peterson, S. Popa: \textit{On the notion of relative property (T) for inclusions of von Neumann algebras},  J. Funct. 
Anal.  219  (2005),  no. {\bf 2}, 469--483.

\bibitem{popacorrespondence} S. Popa: \textit{Correspondences}, INCREST Preprint (1986).  Unpublished.

\bibitem{popamalleableI} S. Popa: \textit{Strong rigidity of $\rm II\sb 1$ factors arising from malleable actions of $w$-rigid groups. I}.  
Invent. Math.  165  (2006),  no. \textbf{2}, 369--408.

\bibitem{popamalleableII} S. Popa: \textit{Strong rigidity of $\rm II\sb 1$ factors arising from malleable actions of $w$-rigid groups. II}.  
Invent. Math.  165  (2006),  no. \textit{2}, 409--451.

\bibitem{popabetti} S. Popa: \textit{{On a class of type $II_1$ factors with Betti numbers invariants}}. Ann. Math. (2) {\bf 163}(3), 809-899 
(2006).

\bibitem{popasuperrigid} S. Popa: \textit{Cocycle and orbit equivalence superrigidity for malleable actions of $w$-rigid groups}.  
Invent. Math.  170  (2007),  no. \textbf{2}, 243--295.

\bibitem{popaicm} S. Popa: \textit{Deformation and rigidity for group actions and von Neumann algebras}, International Congress of Mathematicians.
Vol. I, 445-477, Eur. Math. Soc., Zurich, 2007.

\bibitem{popaspectralgap} S. Popa: \textit{On the superrigidity of malleable actions with spectral gap}.  J. Amer. Math. Soc.  21  (2008),  
no. \textbf{4}, 981--1000.

\bibitem{popavaessuper}  S. Popa, S. Vaes:  \textit{ Group measure space decomposition of II$_1$ factors and $W^*$-superrigidity},
preprint 2009, {\bf arXiv.org}:0906.2765.

\bibitem{sauvageotder} J.-L. Sauvageot:  \textit{Tangent bimodules and locality for dissipative operators on $C^*$-algebras},
Quantum probability and appl., IV, lecture notes in Math. {\bf 1396} (1989), 322-338.

\bibitem{sauvageotcp} J.-L. Sauvageot:  Quantum Dirichlet forms, differential calculus and semigroups.  \textit{Quantum probability and 
applications}, V (Heidelberg, 1988), 334-346, Lecture Notes in Math., 1442, Springer, Berlin, 1990.

\bibitem{sauvageotfeller} J.-L. Sauvageot: \textit{Strong Feller semigroups on $C^*$-algebras}. J. Oper. Theory {\bf 42}, (1999) 83-102.

\bibitem{shalom} Y. Shalom:  Private communication, March 2009.

\bibitem{shalen}P. B. Shalen: \textit{Linear representations of certain amalgamated products}. J. Pure Appl. Algebra {\bf 15} (1979), no. 2, 
187--197. 

\bibitem{singer}I.M. Singer: \textit{Automorphisms of finite factors}, Amer. J. Math. {\bf 77} (1955), 117-133.

\bibitem{voiculescucartan}D. Voiculescu: \textit{The analogues of entropy and of Fisher's information measure in free probability theory. III. The absence of Cartan subalgebras},  Geom. Funct. Anal.  {\bf 6}  (1996),  no. 1, 172--199.

\end{thebibliography}
\end{document}